\newtheorem{theorem}{Theorem}[section]
\newtheorem{lemma}[theorem]{Lemma}
\theoremstyle{definition}
\newtheorem{definition}[theorem]{Definition}
\newtheorem{example}[theorem]{Example}
\newtheorem{corollary}[theorem]{Corollary}
\newtheorem{proposition}[theorem]{Proposition}
\theoremstyle{remark}
\newtheorem{remark}[theorem]{Remark}
\numberwithin{equation}{section}
\begin{document}
\def\C{\mathbb C}
\def\D{\mathbb D}
\def\R{\mathbb R}
\def\X{\mathbb X}
\def\cA{\mathcal A}
\def\cT{\mathcal T}
\def\Z{\mathbb Z}
\def\Y{\mathbb Y}
\def\Z{\mathbb Z}
\def\N{\mathbb N_0}
\def\cal{\mathcal}
\def\F{\mathcal F}

\title[Asymptotic Behavior of Volterra Equations]{On the Asymptotic Behavior of Volterra Difference Equations}
\author{Nguyen Van Minh}
\address{Department of Mathematics, University of West Georgia,
Carrollton, GA 30118} \email{vnguyen@westga.edu}

\thanks{The author would like to thank the referees for carefully reading the manuscript, and for remarks and suggestions to improve the previous version of the paper.}

\subjclass[2000]{Primary: 47D06; Secondary: 47A35; 39A11}
\keywords{Volterra equation, convolution type, asymptotic almost periodicity, stability, instability, Katznelson-Tzafriri Theorem, Arendt-Batty-Ljubich-Vu Theorem}

\begin{abstract}
We consider the asymptotic behavior of solutions of the
difference equations of the form $x(n+1)=Ax(n) + \sum_{k=0}^n B(n-k)x(k) + y(n)$ in a Banach
space $\X$, where $n=0,1,2,...$; $A,B(n)$ are linear bounded operator
in $\X$. Our method of study is based on the concept of spectrum of a unilateral sequence. The obtained results on asymptotic stability and almost periodicity are stated in terms of spectral properties of the equation and its solutions. To this end, a relation between the Z-transform and spectrum of a unilateral sequence is established. The main results extend previous ones.
\end{abstract}
\date{\today}

\maketitle

\section{Introduction, Notations and Preliminaries} \label{section 1}
\subsection{Introduction}
The purpose of this note is to study the asymptotic behavior of solutions to
difference equations of the forms
\begin{eqnarray}
x(n+1)&=&Ax(n) + \sum_{k=0}^n B(n-k)x(k),  \quad x(n)\in \X, n\in \N ,\label{eq}\\
x(n+1)&=&Ax(n) + \sum_{k=0}^n B(n-k)x(k) + y(n),  \quad x(n)\in \X, n\in \N ,\label{eq1}
\end{eqnarray}
where $x(n)\in\X$, $A,B(n)$ are linear continuous operators acting in a Banach space $\X$, $y(n)\in  \X$ is a bounded sequence, under the condition
\begin{equation}\label{bounded eq}
\sum_{k=0}^\infty \| B(k)\| < \infty .
\end{equation}
The asymptotic behavior of solutions of (\ref{eq}) and (\ref{eq1}) is a classical subject of Dynamical Systems and Operator Theory. We refer the reader to the books \cite{arebathieneu,ela}, and papers \cite{arebat3,bat,chitom,elamur,eststrzou2,kalmonoletom,kattza,lauvu,min,naiminmiyham,vu}
and their references for more information in this direction. Related results for differential equations can be found in \cite{arebat,arepru,baspry,min2,minngusie}.

\medskip
In this note we will study Volterra equations of convolution type as a continuation of the research begun in \cite{min}, in which we introduced the concept of spectrum of a "unilateral" sequence, and proposed a new treatment and generalization of some classical results such as Katznelson-Tzafriri Theorem and Arendt-Batty-Ljubich-Vu Theorem for discrete systems. Volterra equations of convolution type
serve as a model for evolutionary processes that take into account of processes' history. This is why Volterra equations are often met in applications. We will combine the spectrum method with the  method of Z-transform that is widely popular in the study of discrete Volterra equations. We will prove various versions of the above mentioned theorems for Volterra equations of the form (\ref{eq}) and (\ref{eq1}) that are stated in terms of spectral properties of equations and solutions. To this end, after recalling some concepts and notations in Section 2 we will discuss the relation between the spectrum of a sequence and its Z-transform (Section 3). In Section 4 we will apply the results of Section 3 to Eq. (\ref{eq}) and (\ref{eq1}) after estimating the spectrum of a bounded solution
using Z-transform and induced operators in the quotient space $\Y := l^\infty (\X) /c_0(\X)$.

\subsection{Notations}
 In this note we will use the following notations: $\N =\{ 0,1,2,\cdots \}$, $\Z$ -  the set of all integers, $\R$ - the set of reals, $\C$ - the complex plane with $\Re z$ denoting the real part of $z\in\C$, $\X$ - a given  complex Banach space. $\Gamma$ and  $ \mathbb D$
denote the unit circle and the open unit disk in the complex plane.
  A sequence in $\X$ will be denoted by $\{x(n)\}_{n=0}^\infty$, or, simply by $(x(n))$, and the spaces of sequences
\begin{eqnarray*}
l^\infty (\X)  :=  \{ (x(n))  \subset \X | \
\sup_{n\in\N} \| x (n)\| <\infty \} , \
c_0 (\X) :=  \{ (x(n))\subset \X | \ \lim_{n\to\infty}x(n) =0\}
\end{eqnarray*}
are equipped with sup-norm. If $A:=(A(n))$ and $y:= (y(n))$ are two sequences in $l^\infty (L(\X))$ and $l^\infty (\X)$, respectively, then, the sequence
$
(A*y)(n):= \sum_{k=0}^n A(k)y(n-k), n\in \N,
$
is called the convolution of $A$ and $y$. If $B\in L(\X)$, then ${\cal B}$ is defined to be the operator in $l^\infty (\X)$ that maps $(x(n))$ to $(Bx(n))$.
The translation operator $S(k)$ acts in $l^\infty (\X)$ as follows:
$
[S(k)x](n) = x(n+k),\quad n\in\N , x\in l^\infty (\X) .
$
For simplicity $S(1)$ will be denoted by $S$. In this paper the space of all
bounded linear operators acting in $\X$ is denoted by $L(\X)$; $\rho
(B), \sigma (B), R\sigma (B), Ran (B)$ denote  the resolvent set,
spectrum, residual spectrum, range of $B\in L(\X)$, respectively. Here we use the definition of the {\it residual spectrum} $R\sigma (A)$ of an operator $A$ in a Banach space $\X$ as the  set of  all $\lambda \in \C$  such  that  range $Ran (\lambda -A)$  is not  dense  in  $\X$.
The notation $B(x_0,r)$ will stand for $\{ x\in \X: \| x-x_0\| <r\}$, and its closure is denoted by $\bar B(x_0,r)$.
\subsection{Asymptotically almost periodic sequences}
Recall that an almost periodic sequence on $\Z$ is
defined to be an element of the subspace
$$AP(\Z,\X):= \overline{span\{
(\lambda ^{n} y_0)_{n\in \mathbb Z},\lambda \in \Gamma ,y_0 \in
\mathbb X\}}$$
of the space of all bounded bilateral sequences $l^\infty (\Z,\X)$ equipped with sup norm.

Let us denote by $AP(\N,\X)$ the space of all restrictions of almost periodic bilateral sequences to $\N$. It can be proved that each element of $AP(\N,\X)$ is the restriction of exactly one element of $AP(\Z,\X)$.
$AAP(\N, \X)$ denotes the space of all asymptotically almost periodic sequences $( x(n))^\infty _{n=0} \subset \X$.
Recall that a sequence $(x(n))\in l^\infty (\X)$ is said to be asymptotically almost
periodic if $x(n)=y(n)+z(n)$ for all $n\in \N$, where $(y(n))\in c_0(\X)$
and $(z(n))$ is an almost periodic sequence.

\subsection{Vector-valued holomorphic functions} The reader is referred to a brief introduction to vector-valued holomorphic functions in \cite[Appendix A, pp. 455-459]{arebathieneu} for a definition and basic properties of holomorphic functions.
A family of continuous functionals $W \subset \X^*$ is said to be {\it separating} if $x\in \X$ and $<x,\phi >=0$ for all $\phi \in W$, then $x=0$. We will need the following whose proof can be found in \cite[Theorem A.7]{arebathieneu}:
\begin{theorem}
Let $\Omega \subset \C$ be an open and connected, and let $f:\Omega \to \X$ be bounded on every compact subset of $\Omega$. Assume further that $W\subset \X^*$ is separating subspace such that $x^* \circ f$ is holomorphic for all $x^*\in W$. Then, $f$ is holomorphic.
\end{theorem}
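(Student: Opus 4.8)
The plan is to reduce the assertion to a local statement near each point and to extract the missing regularity from the scalar Cauchy integral formula applied to the functions $g_{x^*} := x^*\circ f$, $x^*\in W$. Fix $z_0\in\Omega$ and choose $r>0$ so small that the closed disk $\bar B(z_0,2r)\subset\Omega$, and put $M:=\sup_{|\zeta-z_0|\le 2r}\|f(\zeta)\|$, which is finite by hypothesis. For every $x^*\in W$ the scalar function $g_{x^*}$ is holomorphic, so Cauchy's formula on the circle $|\zeta-z_0|=2r$ expresses each of its first divided differences as a contour integral. Subtracting two such representations and estimating the resulting kernel over the circle, I expect to obtain, for all $z,w\in B(z_0,r)$, an inequality of the form $|\langle \frac{f(z)-f(z_0)}{z-z_0}-\frac{f(w)-f(z_0)}{w-z_0},\,x^*\rangle|\le C\,|z-w|\,\|x^*\|$ with $C=C(M,r)$ independent of $x^*$. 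This says that the $\X$-valued divided differences $\phi(z):=(f(z)-f(z_0))/(z-z_0)$ are weakly Cauchy, uniformly over the unit ball of $W$, as $z\to z_0$.

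The next step is to upgrade this uniform weak estimate to a genuine norm statement. For this I would use the embedding $J:\X\to W^*$, $\langle Jx,x^*\rangle:=\langle x,x^*\rangle$ for $x^*\in W$, which is injective precisely because $W$ is separating. By the very definition of the dual norm on $W^*$, the displayed inequality is exactly $\|J\phi(z)-J\phi(w)\|_{W^*}\le C\,|z-w|$. Hence $z\mapsto Jf(z)$ has norm-Cauchy divided differences in the Banach space $W^*$, and letting $z\to z_0$ shows that $Jf:\Omega\to W^*$ is holomorphic. In particular $Jf$ is locally given by a power series converging in the $W^*$-norm, whose coefficients satisfy the Cauchy estimates $\|d_n\|_{W^*}\le M/r^n$, coming from the bound $\|Jf(\zeta)\|_{W^*}\le\|f(\zeta)\|\le M$.

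The remaining, and in my view principal, difficulty is to transfer holomorphy from $W^*$ back to $\X$ itself, and this is exactly where the local boundedness hypothesis must be spent. If $W$ happens to be norming, then $J$ is an isometry onto its closed range, the estimate above is literally an $\X$-norm estimate for $\phi(z)$, and one concludes at once that $f$ is holomorphic with $f'(z_0)=\lim_{z\to z_0}\phi(z)$ in $\X$. For a merely separating $W$ the range $J(\X)$ need not be closed in $W^*$, so one cannot read off $\X$-convergence from $W^*$-convergence for free; the role of local boundedness is to confine the relevant vectors to a set on which the two topologies can be compared, so that $f$ is first shown to be norm-continuous and is then recovered as a genuine $\X$-valued Cauchy integral $f(z)=\frac{1}{2\pi i}\int_{|\zeta-z_0|=2r}\frac{f(\zeta)}{\zeta-z}\,d\zeta$, the integrand now being a norm-continuous bounded $\X$-valued function. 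Differentiating this representation under the integral sign yields holomorphy of $f$ on $B(z_0,r)$, and since $z_0\in\Omega$ was arbitrary, on all of $\Omega$. I expect the delicate point to be precisely this passage from the $W^*$-valued power series to norm-continuity of the $\X$-valued map, where the separating property alone is too weak and the local boundedness is indispensable.
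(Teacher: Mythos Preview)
The paper does not prove this theorem at all; it merely quotes it and refers to \cite[Theorem A.7]{arebathieneu} for the proof, so there is no ``paper's own proof'' to compare against.

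That said, your outline has a genuine gap at exactly the point you yourself flag. Everything up to and including the holomorphy of $Jf:\Omega\to W^*$ is fine: the Cauchy-kernel estimate on divided differences gives $|\langle \phi(z)-\phi(z'),x^*\rangle|\le C\,|z-z'|\,\|x^*\|$ for $x^*\in W$, hence $\|J\phi(z)-J\phi(z')\|_{W^*}\le C|z-z'|$, and this does yield holomorphy of $Jf$ in $W^*$ (indeed, the canonical image of $W$ in $W^{**}$ is norming for $W^*$, so the easy ``norming'' case of weak-implies-strong holomorphy applies here). The problem is the return trip. Your sentence ``the role of local boundedness is to confine the relevant vectors to a set on which the two topologies can be compared, so that $f$ is first shown to be norm-continuous'' is not an argument: for a merely separating, non-norming $W$ the $\sigma(\X,W)$-topology and the norm topology are \emph{not} comparable on bounded sets, and sequences in $J(\X)$ that converge in $W^*$-norm need not come from norm-convergent sequences in $\X$, because $J(\X)$ need not be closed in $W^*$. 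In particular, you cannot read off from $\|J\phi(z)-J\phi(z')\|_{W^*}\le C|z-z'|$ that $(\phi(z))$ is norm-Cauchy in $\X$, nor that the Taylor coefficients $d_n\in W^*$ of $Jf$ lie in $J(\X)$, nor that the Riemann sums of your proposed Cauchy integral converge in $\X$. Your plan therefore assumes precisely the continuity it needs to prove.

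What is missing is a mechanism that forces the $W^*$-limits back into $\X$; the argument in \cite{arebathieneu} supplies one, and it is not the heuristic you describe. If you want to repair the sketch, you must either (a) produce, for each Taylor coefficient $d_n$ of $Jf$, an explicit element $a_n\in\X$ with $Ja_n=d_n$ and then control $\|a_n\|_\X$ (not merely $\|d_n\|_{W^*}$), or (b) show directly that $x^*\circ f$ is holomorphic for \emph{every} $x^*\in\X^*$, after which the classical weak-implies-strong result finishes the job. As written, the proposal stops short of either.
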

We will need an auxiliary result that is a special kind of maximum principle for holomorphic functions (for the proof see e.g. \cite[Lemma 4.6.6]{arebathieneu}):
\begin{lemma}\label{lem 4.6.6}
Let $U$ be an open neighborhood of $i\eta$ such that $U$ contains the closed disk $\bar{B}(i\eta ,2r)=\{ z\in\C :|z-i\eta | \le 2r\}$. Let $h:U\to \X$ be holomorphic and $c\ge 0$, $k\in \N$ such that
\begin{equation}
\| h(z)\| \le \frac{c}{|\Re z|^k}, \quad \mbox{if} \ |z-i\eta |=2r , \Re z \not = 0.
\end{equation}
Then
\begin{equation}
\| h(z)\| \le \left( \frac{4}{3}\right)^k  \frac{c}{r^k}, \quad \mbox{for all}\ \ z\in \bar B(i\eta ,r).
\end{equation}
\end{lemma}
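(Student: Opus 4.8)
The plan is to convert the pointwise growth condition into a genuine boundary estimate by multiplying $h$ by a scalar holomorphic factor that exactly cancels the singular weight $|\Re z|^{-k}$ on the circle $|z-i\eta|=2r$, after which the maximum modulus principle finishes the job. First I would recenter at the origin by setting $w=z-i\eta$; since $i\eta$ is purely imaginary we have $\Re z=\Re w$, and $\tilde h(w):=h(i\eta+w)$ is holomorphic on an open neighborhood of $\bar B(0,2r)$.

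The key computation is an identity that holds only on the circle $|w|=2r$, where $w\bar w=4r^2$ and hence $\bar w=4r^2/w$:
\[
\Re w=\frac{w+\bar w}{2}=\frac{w^2+4r^2}{2w}.
\]
Writing $w=2re^{i\theta}$ gives $w^2+4r^2=8r^2\cos\theta\, e^{i\theta}$, so that $|w^2+4r^2|=4r\,|\Re w|$ on $|w|=2r$. This motivates the auxiliary function
\[
g(w):=\tilde h(w)\,(w^2+4r^2)^k,
\]
which is holomorphic on a neighborhood of $\bar B(0,2r)$.

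Next I would check the boundary bound $\|g(w)\|\le c(4r)^k$ for $|w|=2r$. Wherever $\Re w\neq 0$ this follows at once from the hypothesis and the identity above. At the two remaining points $w=\pm 2ri$ (where $\Re w=0$) the hypothesis is silent, but $g$ is continuous and the estimate already holds on the rest of the circle, so it persists there by continuity. The maximum modulus principle — in its vector-valued form, obtained by composing with norm-one functionals $x^*$ and using the Hahn-Banach theorem to recover $\|g(w)\|=\sup_{\|x^*\|\le 1}|x^*(g(w))|$ — then yields $\|g(w)\|\le c(4r)^k$ for every $w\in\bar B(0,2r)$.

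Finally, for $|w|\le r$ I would divide the factor back out. Since $|w^2+4r^2|\ge 4r^2-|w|^2\ge 3r^2$, we get
\[
\|\tilde h(w)\|=\frac{\|g(w)\|}{|w^2+4r^2|^k}\le \frac{c(4r)^k}{(3r^2)^k}=\left(\frac43\right)^k\frac{c}{r^k},
\]
which is precisely the claimed estimate on $\bar B(i\eta,r)$. I expect the only real obstacle to be the discovery of the multiplier $(w^2+4r^2)^k$: its sole purpose is to trade the unbounded weight $|\Re z|^{-k}$ for a constant on the boundary circle while staying holomorphic inside, and once it is found the remainder is a routine maximum-principle argument, needing only minor care at the two points where $\Re w=0$ and in passing to the Banach-space-valued setting.
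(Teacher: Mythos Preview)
Your proof is correct and is in fact the standard argument: the multiplier $(w^{2}+4r^{2})^{k}$ is exactly the device used in the reference the paper cites, so your approach coincides with the intended one. Note that the paper itself does not prove this lemma but merely refers to \cite[Lemma 4.6.6]{arebathieneu}; your write-up fills in that citation faithfully.
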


\section{Spectral Theory of Unilateral Sequences}
Let $\cal F$ be a non-trivial closed subspace of $l^\infty (\X)$ that is translation-bi-invariant, that is, $S^{-1}({\cal F})={\cal F}$. A model for such spaces ${\cal F}$ is $c_0(\X)$ or $AAP(\N,\X)$. It is easy to see that this definition is equivalent to $S^{-n}({\cal F})={\cal F}$ for any positive integer $n$, so any sequence $\{ x(n)\}_{n=0}^\infty$ such that $x(n)=0$ for all large $n$ belongs to ${\cal F}$. Therefore,
every translation-bi-invariant subspace of $l^\infty (\X)$ contains $c_0(\X)$. For related studies on this kind of spaces the reader is referred to \cite{arebat,arebathieneu}. Consider the quotient space $\Y := l^\infty (\X) /\F $
with the induced norm. The equivalent class containing a sequence
$x:=(x(n))\in l^\infty (\X)$ will be denoted by $\bar x$. Since $S$ leaves
$\F$ invariant it induces a bounded linear operator $\bar S$ acting
in $\Y$. Moreover, one notes that $\bar S$ is a surjective isometry.
As a consequence, $\sigma (\bar S)\subset \Gamma$. We will use the
following estimate for the resolvent of the isometry $\bar S$ whose
proof can be easily obtained:
\begin{equation}\label{112}
\| R(\lambda ,\bar S)\| \le \frac{1}{||\lambda |-1|}, \quad \mbox{for all} \ |\lambda | \not= 1.
\end{equation}

\begin{definition}
Let $(x(n))$ be a bounded sequence in $\X$.
The notation $\sigma (x)$ stands
for the set of all non-removable singular points of the complex
function $g(\lambda ):= R(\lambda ,\bar S)\bar x$. This set will be denoted by $\sigma_\F (x)$, and
referred to as {\it the reduced spectrum of $x$} with respect to $\F$, or simply $\F$-spectrum of $x$. If $\F=c_0(\X)$ we will denote the $\F$-spectrum of $x:=(x(n))\in l^\infty (\X)$ by $\sigma (x)$, and call it the spectrum of $x$.
\end{definition}

In the following we summarize some important properties of the spectrum of a function.
\begin{proposition}\label{pro 3.3}
Let $\{ g_n\}_{n=1}^\infty \subset l^\infty (\X)$ such that
$g_n\to g\in l^\infty (\X)$, and let $\Lambda$ be a closed subset
of the unit circle. Then the following assertions hold:
\begin{enumerate}
\item \ $\sigma (g)$ is closed. \item \ If $\sigma (g_{n}) \subset
\Lambda$ for all $n \in {\N}$, then $\sigma (g)\subset \Lambda $.
\item \ $\sigma ({\cal A}g)\subset \sigma (g)$ for all $A\in
L(\X)$. \item  $\sigma (g) =\emptyset$, if and only if $\bar g=\bar 0$.
\end{enumerate}
\end{proposition}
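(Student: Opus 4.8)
The plan is to work throughout with the $\Y$-valued resolvent functions $G(\lambda):=R(\lambda,\bar S)\bar g$ and $G_n(\lambda):=R(\lambda,\bar S)\bar g_n$, which are holomorphic on $\rho(\bar S)\supset\C\setminus\Gamma$; since $\sigma(\bar S)\subset\Gamma$, all their non-removable singularities lie on $\Gamma$, and by definition these constitute $\sigma(g)$ and $\sigma(g_n)$. Here $\Y=l^\infty(\X)/c_0(\X)$. Assertion (i) is then immediate: $\lambda_0$ is a regular point exactly when $G$ admits a holomorphic extension to some open neighborhood of $\lambda_0$, which is an open condition; hence the regular set is open and $\sigma(g)$, its complement in $\Gamma$, is closed.

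For (iii), note first that $\cal A$ preserves $c_0(\X)$, so it induces a bounded operator $\bar{\cal A}$ on $\Y$, and since $\cal A S=S\cal A$ on $l^\infty(\X)$ we get $\bar{\cal A}\bar S=\bar S\bar{\cal A}$, whence $\bar{\cal A}$ commutes with $R(\lambda,\bar S)$. Consequently $R(\lambda,\bar S)\overline{\cal A g}=\bar{\cal A}\,G(\lambda)$, and applying the bounded operator $\bar{\cal A}$ transports any local holomorphic extension of $G$ to one of $\lambda\mapsto\bar{\cal A}G(\lambda)$; thus every regular point of $g$ is regular for $\cal A g$, i.e. $\sigma(\cal A g)\subset\sigma(g)$. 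For (iv), if $\bar g=\bar 0$ then $G\equiv 0$ and $\sigma(g)=\emptyset$ trivially. Conversely, $\sigma(g)=\emptyset$ means $G$ extends across every point of $\Gamma$ to an entire function; by (\ref{112}), $\|G(\lambda)\|\le\|\bar g\|/(|\lambda|-1)\to 0$ as $|\lambda|\to\infty$, so the vector-valued Liouville theorem forces $G\equiv 0$, i.e. $R(\lambda,\bar S)\bar g=0$ on $\rho(\bar S)$; applying $\lambda-\bar S$ yields $\bar g=\bar 0$.

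The substance is assertion (ii). Fix $\lambda_0\in\Gamma\setminus\Lambda$ and choose $r>0$ so small that $\bar B(\lambda_0,2r)\cap\Lambda=\emptyset$. Because $\sigma(g_n)\subset\Lambda$, each $G_n$ extends holomorphically across the arc $\bar B(\lambda_0,2r)\cap\Gamma$ and so is holomorphic on the full disk $\bar B(\lambda_0,2r)$. On the boundary circle $|\lambda-\lambda_0|=2r$, at points off $\Gamma$, the resolvent estimate (\ref{112}) gives
\begin{equation*}
\|G_n(\lambda)-G_m(\lambda)\|=\|R(\lambda,\bar S)(\bar g_n-\bar g_m)\|\le\frac{\|\bar g_n-\bar g_m\|}{\,||\lambda|-1|\,}.
\end{equation*}
Via a conformal (Cayley-type) map sending a neighborhood of $\lambda_0$ onto a neighborhood of a point of the imaginary axis, carrying $\Gamma$ to that axis and $||\lambda|-1|$ to a quantity comparable to $|\Re z|$, this is precisely the hypothesis of Lemma \ref{lem 4.6.6} with $k=1$ and $c$ a fixed multiple of $\|\bar g_n-\bar g_m\|$. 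The lemma then yields a bound $\|G_n(\lambda)-G_m(\lambda)\|\le C\|\bar g_n-\bar g_m\|$ on $\bar B(\lambda_0,r)$ with $C$ independent of $n,m$. Since $g_n\to g$ in $l^\infty(\X)$ gives $\bar g_n\to\bar g$ in $\Y$, the sequence $(G_n)$ is uniformly Cauchy on $\bar B(\lambda_0,r)$; its uniform limit is holomorphic and agrees with $G$ on $B(\lambda_0,r)\setminus\Gamma$, furnishing a holomorphic extension of $G$ across $\lambda_0$. Hence $\lambda_0\notin\sigma(g)$, and as $\lambda_0\in\Gamma\setminus\Lambda$ was arbitrary, $\sigma(g)\subset\Lambda$.

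I expect the uniform estimate in (ii) to be the only delicate point. The difficulty is purely in fitting (\ref{112}) to the exact hypothesis of Lemma \ref{lem 4.6.6}: one must account for the curvature of $\Gamma$ near $\lambda_0$ so that $||\lambda|-1|$ is genuinely bounded below by a constant multiple of $|\Re z|$ on the transformed boundary, and note that the finitely many boundary points lying on $\Gamma$ are harmless because the lemma only imposes its bound where $\Re z\ne 0$. Everything else reduces to the commutation and boundedness of $\bar{\cal A}$, the resolvent estimate, and standard facts about uniform limits of vector-valued holomorphic functions.
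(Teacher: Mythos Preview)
Your proof is correct and, unlike the paper, actually self-contained: the paper's ``proof'' merely declares (i) and (iii) obvious and cites \cite{minngusie} and \cite{min} for (ii) and (iv), so there is no line-by-line comparison to make. Your arguments for (i), (iii), (iv) are the standard ones; for (iv) the Liouville step is exactly the route taken in \cite{min}. For (ii) your strategy---bound $G_n-G_m$ on the boundary circle by the resolvent estimate (\ref{112}), then invoke the special maximum principle of Lemma~\ref{lem 4.6.6} after straightening $\Gamma$---is precisely the mechanism the paper itself deploys later in the proof of Lemma~\ref{lem z-transform}. There the change of variables is the exponential $\lambda=e^{\xi}$ (with $|1-|\lambda||\ge |\Re\xi|/2$ for small $|\Re\xi|$), which is what you should use in place of the vaguer ``Cayley-type'' map; with that substitution your uniform Cauchy estimate on $\bar B(\lambda_0,r)$ is immediate and the uniform limit argument finishes (ii) cleanly. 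The only cosmetic point is to ensure $r$ is chosen so that the transformed closed disk $\bar B(i\theta_0,2r)$ lies inside the domain where all the $G_n\circ\exp$ are holomorphic, which your choice $\bar B(\lambda_0,2r)\cap\Lambda=\emptyset$ guarantees after shrinking $r$ once more.
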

\begin{proof}
The properties (i) and (ii) are obvious from the definition. The proof of (ii) can be found in \cite[Proposition 3.3]{minngusie}, and (iv) is proved
in \cite[Lemma 2.7]{min}.
\end{proof}
\begin{corollary}
Let $\Lambda$ be a closed subset of the unit circle $\Gamma$. Then, the set $\Lambda (\X)$ of all sequences $x:=(x(n))\in l^\infty (\X)$ whose spectrum $\sigma (x) \subset \Lambda$ is a closed linear subspace of $l^\infty (\X)$.
\end{corollary}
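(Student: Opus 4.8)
The plan is to verify the two defining properties separately: that $\Lambda(\X)$ is closed under the vector space operations, and that it is closed in the sup-norm topology of $l^\infty(\X)$. The key structural fact I would exploit throughout is that the assignment sending a sequence to the holomorphic function whose singular set defines its spectrum is linear; once this is isolated, both parts become short.

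For linearity, I would first observe that for fixed $x\in l^\infty(\X)$ the function $g_x(\lambda) := R(\lambda,\bar S)\bar x$ depends linearly on $x$, since $x\mapsto \bar x$ is the linear quotient map onto $\Y$ and $R(\lambda,\bar S)$ is linear for each $\lambda$. Hence $g_{\alpha x+\beta y}=\alpha g_x+\beta g_y$ for all scalars $\alpha,\beta$. By definition $\sigma(x)$ is the set of non-removable singularities of $g_x$ on $\Gamma$ (off $\Gamma$ the resolvent is holomorphic, so $g_x$ is too). Now if $\lambda_0\in\Gamma$ lies outside $\sigma(x)\cup\sigma(y)$, then both $g_x$ and $g_y$ extend holomorphically to a neighborhood of $\lambda_0$, and therefore so does the linear combination $g_{\alpha x+\beta y}$; thus $\lambda_0\notin\sigma(\alpha x+\beta y)$. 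This yields the subadditivity $\sigma(\alpha x+\beta y)\subset\sigma(x)\cup\sigma(y)$, so if both spectra lie in $\Lambda$ then so does $\sigma(\alpha x+\beta y)$, and $\Lambda(\X)$ is a linear subspace.

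For closedness, I would simply invoke property (ii) of Proposition \ref{pro 3.3}. If $(x_n)$ is a sequence in $\Lambda(\X)$ converging to $x$ in $l^\infty(\X)$, then $\sigma(x_n)\subset\Lambda$ for every $n$, and since $\Lambda$ is a closed subset of the circle, that proposition gives $\sigma(x)\subset\Lambda$; hence $x\in\Lambda(\X)$, and $\Lambda(\X)$ is closed.

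The genuinely nontrivial input, namely the semicontinuity statement that spectra cannot suddenly enlarge under limits, is already packaged in Proposition \ref{pro 3.3}(ii), so there is no real obstacle remaining at the level of this corollary. The only point demanding a little care is the subadditivity under addition: it rests on the elementary but essential fact that a point at which two holomorphic extensions exist is automatically a removable singularity of their sum, which is precisely what makes $\sigma$ behave like the singular support of a linear object rather than an arbitrary set-valued assignment.
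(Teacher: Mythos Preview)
Your proof is correct and follows essentially the same approach as the paper. The paper's proof is a single sentence invoking Proposition~\ref{pro 3.3}(ii); you do the same for closedness, while additionally spelling out the linearity of $\Lambda(\X)$ via the subadditivity $\sigma(\alpha x+\beta y)\subset\sigma(x)\cup\sigma(y)$, a point the paper leaves implicit.
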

\begin{proof}
The corollary is an immediate consequence of part (ii) of proposition \ref{pro 3.3}.
\end{proof}

 We define ${\cal M}_{\bar x}$ as the smallest closed subspace
of $\Y := l^\infty (\X)/c_0$ spanned by $\{ \bar S^n\bar x, n\in
\Z\}$. Consider the restriction $\bar S |_{{\cal M}_{\bar x}}$ that
is also a surjective isometry. The following lemma was actually proven in
\cite[Lemma 2.7]{min}, so its proof is omitted:
\begin{lemma}\label{lem 4} Let $x:=(x(n))\in l^\infty (\X )$. Then, the following assertions hold:
\begin{enumerate}
\item $\sigma (x)=\emptyset$ if and only if $x\in c_0$;
\item If $\sigma ( x) \not= \emptyset$, then
  $\sigma (x)=\sigma ( \bar S |_{{\cal M}_{\bar x}})$.
\end{enumerate}
\end{lemma}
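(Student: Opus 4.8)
The plan is to regard $g(\lambda):=R(\lambda,\bar S)\bar x$ as the local resolvent of $\bar x$ and to exploit that $\bar S$, as well as its restriction to ${\cal M}_{\bar x}$, is a surjective isometry, so that the estimate (\ref{112}) is available throughout. For part (i), if $x\in c_0$ then $\bar x=\bar 0$, hence $g\equiv 0$ and $\sigma(x)=\emptyset$. Conversely, assume $\sigma(x)=\emptyset$. Since $g$ is holomorphic on $\C\setminus\Gamma\subset\rho(\bar S)$ and every point of $\Gamma$ is then a removable singularity, $g$ extends to an entire $\Y$-valued function; by (\ref{112}) we have $\|g(\lambda)\|\le \frac{\|\bar x\|}{|\lambda|-1}\to 0$ as $|\lambda|\to\infty$, so $g$ is bounded on $\C$. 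Composing with functionals in $\Y^*$ and invoking the scalar Liouville theorem forces $g\equiv 0$, whence $\bar x=(\lambda-\bar S)g(\lambda)=\bar 0$, i.e. $x\in c_0$ (this is exactly Proposition \ref{pro 3.3}(iv)).

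For part (ii) I would first verify that $g$ takes values in ${\cal M}_{\bar x}$: for $|\lambda|\neq 1$ the Neumann and Laurent expansions of $R(\lambda,\bar S)$ express $g(\lambda)$ as a norm-convergent series in $\{\bar S^n\bar x:n\in\Z\}$, and since ${\cal M}_{\bar x}$ is closed, $g(\lambda)\in{\cal M}_{\bar x}$ persists on all of $\rho(\bar S)$. Write $T:=\bar S|_{{\cal M}_{\bar x}}$, again a surjective isometry, so $\sigma(T)\subset\Gamma$ and (\ref{112}) holds for $T$ as well. Because $R(\lambda,T)\bar x$ and $g(\lambda)$ coincide on $\{|\lambda|\neq 1\}$ (the same series), the map $R(\cdot,T)\bar x$ is a holomorphic continuation of $g$ across every point of $\rho(T)$, so $g$ has no non-removable singularity there. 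This gives the inclusion $\sigma(x)\subset\sigma(T)$.

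The reverse inclusion $\sigma(T)\subset\sigma(x)$ is the crux and uses that $\bar x$ is cyclic for $T$, i.e. ${\cal M}_{\bar x}=\overline{\mathrm{span}}\{T^n\bar x:n\in\Z\}$. Suppose $\lambda_0\notin\sigma(x)$, so $g$ extends holomorphically to a disk $\bar B(\lambda_0,2r)$ as some $h$ with $(\lambda-T)h(\lambda)=\bar x$. Applying $T^n$ and taking finite linear combinations shows that the local resolvent $R(\cdot,T)w$ of every $w\in\mathrm{span}\{T^n\bar x\}$ extends holomorphically to $\bar B(\lambda_0,2r)$. On the part of the boundary circle lying off $\Gamma$, estimate (\ref{112}) for $T$ gives $\|R(\lambda,T)w\|\le\frac{\|w\|}{||\lambda|-1|}$. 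Transferring Lemma \ref{lem 4.6.6} from the imaginary axis to $\Gamma$ via the substitution $\lambda=e^{z}$ (under which $||\lambda|-1|$ is comparable to $|\Re z|$), I would conclude a bound $\|R(\lambda,T)w\|\le\frac{C\|w\|}{r}$ on $\bar B(\lambda_0,r)$ with a universal constant $C$. Since $\mathrm{span}\{T^n\bar x\}$ is dense in ${\cal M}_{\bar x}$, this upgrades to the uniform resolvent bound $\|R(\lambda,T)\|\le\frac{C}{r}$ for $\lambda\in\bar B(\lambda_0,r)\cap\rho(T)$. If $\lambda_0$ were in $\sigma(T)$, then approaching $\lambda_0$ radially through points off $\Gamma$ (which lie in $\rho(T)$) the lower bound $\|R(\lambda,T)\|\ge\frac{1}{\mathrm{dist}(\lambda,\sigma(T))}$ would force $\|R(\lambda,T)\|\to\infty$, contradicting this bound. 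Hence $\lambda_0\in\rho(T)$, so $\sigma(T)\subset\sigma(x)$ and therefore $\sigma(x)=\sigma(\bar S|_{{\cal M}_{\bar x}})$.

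I expect the main obstacle to be precisely the uniform maximum-principle estimate: one must convert the pointwise holomorphic extensions $R(\cdot,T)w$ into a single uniform resolvent bound near $\lambda_0$, which requires both the correct conformal transfer of Lemma \ref{lem 4.6.6} to the unit circle and uniformity of the constant in $w$. The remaining steps are routine bookkeeping with surjective isometries and analytic continuation.
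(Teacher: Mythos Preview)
The paper does not actually prove this lemma; it merely cites \cite[Lemma 2.7]{min} and omits the argument. So there is no in-paper proof to compare against, and your proposal has to be judged on its own merits.

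Your argument is correct and is essentially the standard local-spectral-radius argument for cyclic vectors of an invertible isometry. Part (i) is fine: once $g$ extends to an entire $\Y$-valued function, continuity on a compact neighborhood of $\Gamma$ together with $\|g(\lambda)\|\le \|\bar x\|/(|\lambda|-1)$ for $|\lambda|\ge 2$ gives global boundedness, and Liouville (via functionals) finishes it. In Part (ii) the inclusion $\sigma(x)\subset\sigma(T)$ is immediate once you note that $g(\lambda)\in{\cal M}_{\bar x}$ for $|\lambda|\ne 1$; for the reverse inclusion, the chain ``cyclicity $+$ commutation $+$ uniform maximum principle $\Rightarrow$ bounded resolvent near $\lambda_0$ $\Rightarrow$ $\lambda_0\in\rho(T)$'' is exactly right. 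Two small points you should make explicit when writing it up: (a) the holomorphic extension $h$ of $g$ across $\lambda_0$ automatically takes values in the closed subspace ${\cal M}_{\bar x}$ (by continuity and density of $\{|\lambda|\neq 1\}$ in the disk), which is what legitimizes writing $(\lambda-T)h(\lambda)=\bar x$ rather than $(\lambda-\bar S)h(\lambda)=\bar x$; and (b) in the conformal transfer $\lambda=e^z$, you need to choose $r'$ so that $e^{\bar B(i\theta_0,2r')}\subset B(\lambda_0,2r)$ and so that $||e^z|-1|\ge c|\Re z|$ on that ball, both of which are elementary for small $r'$. With those details filled in, your outline goes through; the paper itself already carries out the same conformal maximum-principle trick in the proof of Lemma~\ref{lem z-transform}, so you are not importing anything foreign.
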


Before proceeding we introduce a new notation:  let $0\not= z\in \C$ such that
$z= re^{i\varphi}$ with reals $r=|z|,\varphi$, and let $F(z)$ be any complex
function. Then, (with $s$ larger than $r$) we define
\begin{equation}
\lim_{\lambda \downarrow z} F(\lambda ):= \lim_{s\downarrow r}F(s e^{i\varphi}).
\end{equation}
The following theorem will be the key tool for our results.
\begin{theorem}\label{the tec}
Let $(x(n))$ be a bounded sequence such that $\sigma ( x)$
 is countable, and let the following condition hold for
each $\xi_0\in \sigma (x)$
\begin{equation}
\lim_{\lambda \downarrow \xi_0} (\lambda -\xi_0 )R(\lambda ,\bar
S)\bar x =0 .
\end{equation}
Then, \begin{equation}\label{55} \lim_{n\to\infty}x(n)=0.
\end{equation}
\end{theorem}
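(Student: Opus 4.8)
The plan is to reduce the assertion \eqref{55} to the statement $\sigma(x)=\emptyset$ and then to rule out a nonempty spectrum by a contradiction argument of Arendt--Batty--Ljubich--Vu type. Indeed, by part (i) of Lemma \ref{lem 4}, $\sigma(x)=\emptyset$ is equivalent to $x\in c_0(\X)$, which is exactly \eqref{55}. So suppose, for contradiction, that $\sigma(x)\neq\emptyset$. By part (i) of Proposition \ref{pro 3.3} the set $\sigma(x)$ is closed, and it lies in the compact circle $\Gamma$; by hypothesis it is countable. Since a nonempty perfect subset of a complete metric space is uncountable, the countable, closed, nonempty set $\sigma(x)$ must contain an isolated point $\xi_0$. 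I would then fix $r>0$ so small that $\bar B(\xi_0,2r)\cap\sigma(x)=\{\xi_0\}$.

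Because points of $\Gamma\setminus\sigma(x)$ are removable singularities of $g(\lambda):=R(\lambda,\bar S)\bar x$, and $g$ is holomorphic on $\rho(\bar S)\supseteq\{\lambda:|\lambda|\neq1\}$ (as $\sigma(\bar S)\subseteq\Gamma$), the function $g$ is holomorphic on the punctured disk $\bar B(\xi_0,2r)\setminus\{\xi_0\}$; thus $\xi_0$ is an honest isolated singularity of $g$ in the plane. Moreover \eqref{112} gives the two-sided bound $\|g(\lambda)\|\le\|\bar x\|/\,||\lambda|-1|$ whenever $|\lambda|\neq1$. To exploit this near $\xi_0$ I would pass to the exponential chart $\lambda=\xi_0 e^{z}$, which maps a neighborhood of $z=0$ onto one of $\xi_0$, sends $\{\Re z\neq0\}$ to $\{|\lambda|\neq1\}$ and carries $\Gamma$ to the imaginary axis; writing $h(z):=g(\xi_0 e^{z})$, the estimate becomes $\|h(z)\|\le C/|\Re z|$ for small $z$ with $\Re z\neq0$, and $h$ is holomorphic on a punctured disk about $0$.

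The crux, and the step I expect to be the main obstacle, is to upgrade this to $\|h(z)\|\le C'/|z|$ on a punctured neighborhood of $0$, i.e.\ to show $(\lambda-\xi_0)g(\lambda)$ stays bounded as $\lambda\to\xi_0$. The difficulty is that \eqref{112} degenerates precisely in the directions tangent to $\Gamma$ (the imaginary axis in the $z$-chart), where $|\Re z|$ is small compared with $|z|$, so that estimate alone is useless there. This is exactly the situation Lemma \ref{lem 4.6.6} is designed for: applying it to $h$ on the disks $\bar B(i\eta_0,|\eta_0|/2)$ centered at nonzero axis points $i\eta_0$ (which avoid the singularity $0$, yet on whose boundaries the hypothesis $\|h(z)\|\le C/|\Re z|$ holds for $\Re z\neq0$, with $k=1$) transfers the transverse control onto the axis and yields $\|h(i\eta_0)\|\le C'/|\eta_0|$. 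Combining this near-axis estimate with \eqref{112} in the complementary transverse region $|\Re z|\gtrsim|z|$ gives the desired bound $\|h(z)\|\le C'/|z|$.

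Once this is in hand, $\psi(z):=z\,h(z)$ is bounded and holomorphic on a punctured disk about $0$, hence extends holomorphically across $0$ by the vector-valued removable-singularity theorem; thus $h$ has at worst a simple pole there, $h(z)=\psi(0)/z+(\text{holomorphic})$. Finally I would invoke the hypothesis: along the radial approach $\lambda=s\xi_0$, $s\downarrow1$, which corresponds to real $z\downarrow0$, one computes $(\lambda-\xi_0)g(\lambda)=\xi_0(e^{z}-1)h(z)=\xi_0\,\tfrac{e^{z}-1}{z}\,\psi(z)\to\xi_0\,\psi(0)$, so the assumed limit $0$ forces $\psi(0)=0$. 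Hence $h$, and therefore $g$, is holomorphic at $\xi_0$, contradicting $\xi_0\in\sigma(x)$. Therefore $\sigma(x)=\emptyset$, and by Lemma \ref{lem 4}(i) we obtain $x\in c_0(\X)$, which is \eqref{55}.
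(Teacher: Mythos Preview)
The paper does not actually give a proof of this theorem: it simply refers to \cite[Theorem 2.8]{min}. Your argument is correct and is precisely the classical Arendt--Batty--Lyubich--V\~u Tauberian mechanism transplanted to the quotient space $\Y=l^\infty(\X)/c_0(\X)$: isolate a point of the countable spectrum, use the resolvent estimate \eqref{112} together with Lemma~\ref{lem 4.6.6} in the exponential chart to show the local resolvent has at worst a simple pole there, and then kill the residue with the radial ergodicity hypothesis. This is indeed the line of proof in \cite{min}, so your approach matches the one the paper defers to.

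One small remark on the only delicate step: when you apply Lemma~\ref{lem 4.6.6} on $\bar B(i\eta_0,|\eta_0|/2)$ you are taking $2r=|\eta_0|/2$, so the conclusion is $\|h(z)\|\le (4/3)\cdot 4C/|\eta_0|$ on $\bar B(i\eta_0,|\eta_0|/4)$; these smaller disks cover the cone $|\Re z|\le |\Im z|/4$ near $0$, and on the complementary cone $|\Re z|>|\Im z|/4$ one has $|\Re z|\ge |z|/\sqrt{17}$, so \eqref{112} already gives the $C'/|z|$ bound there. It may be worth spelling this covering out explicitly, but the argument is sound.
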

\begin{proof}
For the proof see \cite[Theorem 2.8]{min}.
\end{proof}

\section{The Z-transform and spectrum of a sequence}

Recall that the $Z$-transform of a sequence  $x:=\{ x(n)\}_{n=0} ^\infty$ is defined as
\begin{equation}
\tilde{x}(z):=\sum_{j=0}^\infty x(j)z^{-j}.
\end{equation}
\begin{proposition}
Let $(A(n))\in l^\infty (L(\X))$ and $(x(n))$ be in $l^\infty (\X)$. Then,
\begin{enumerate}
\item $\tilde{x}(z)$ is a complex function in $z$ that is defined and holomorphic for $|z|>1$;
\item $\widetilde{Sx}(z)= z\tilde{x}(z)-zx(0)$;
\item $\widetilde{A*x}(z)=\tilde{A}(z)\cdot \tilde{x}(z)$ if in addition $\sum _{n=0}^\infty \| A(n)\| <\infty $;
\item For each $x:=(x(n))\in l^\infty (\X)$
\begin{equation}\label{initial value of z-trans}
\lim_{|z|\to\infty}\tilde{x}(z)=x(0).
\end{equation}
\end{enumerate}
\end{proposition}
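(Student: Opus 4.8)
The plan is to treat the four assertions in turn; only (iii) requires genuine care. Throughout, put $M:=\sup_{n\in\N}\|x(n)\|$, which is finite since $(x(n))\in l^\infty(\X)$. For (i), I would first note that for $|z|>1$ the defining series is dominated termwise by the convergent geometric series $\sum_{j}M|z|^{-j}$, so $\tilde x(z)$ is well defined; moreover, on any compact $K\subset\{|z|>1\}$ one has $\inf_{z\in K}|z|=:\rho>1$, and the tails are bounded by $M\sum_{j>N}\rho^{-j}$, so the partial sums $\sum_{j=0}^N x(j)z^{-j}$ converge uniformly on $K$. Each partial sum is a polynomial in $z^{-1}$, hence holomorphic on $\{|z|>1\}$, and $\tilde x$ is locally bounded there. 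Applying this to $x^*\circ\tilde x$ for $x^*$ ranging over the separating family $\X^*$ shows each scalar function $z\mapsto\langle\tilde x(z),x^*\rangle$ is a locally uniform limit of holomorphic functions, hence holomorphic; the vector-valued holomorphy theorem recalled above then yields that $\tilde x$ itself is holomorphic.

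For (ii), a shift of the summation index does it: since $[Sx](n)=x(n+1)$, reindexing by $k=j+1$ gives
\[
\widetilde{Sx}(z)=\sum_{j=0}^\infty x(j+1)z^{-j}=z\sum_{k=1}^\infty x(k)z^{-k}=z\bigl(\tilde x(z)-x(0)\bigr),
\]
which is the claimed identity. For (iii), the heart of the matter is rearranging a double series into a Cauchy product, and I would justify this by absolute convergence. For $|z|>1$,
\[
\sum_{k=0}^\infty\sum_{m=0}^\infty\|A(k)\|\,\|x(m)\|\,|z|^{-(k+m)}\le\Bigl(\sum_{k=0}^\infty\|A(k)\|\,|z|^{-k}\Bigr)\Bigl(\sum_{m=0}^\infty M|z|^{-m}\Bigr)<\infty,
\]
using the hypothesis $\sum_k\|A(k)\|<\infty$ together with $\sup_m\|x(m)\|=M$. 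This absolute convergence lets me group the terms of the product $\tilde A(z)\tilde x(z)=\bigl(\sum_k A(k)z^{-k}\bigr)\bigl(\sum_m x(m)z^{-m}\bigr)$ according to the value of $n:=k+m$; collecting them produces $\sum_{n=0}^\infty\bigl(\sum_{k=0}^n A(k)x(n-k)\bigr)z^{-n}$, which is exactly $\widetilde{A*x}(z)$. The only extra subtlety over the scalar case is the vector-valued bookkeeping, and absolute convergence in norm handles it verbatim.

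For (iv), I would isolate the constant term and estimate the tail geometrically:
\[
\bigl\|\tilde x(z)-x(0)\bigr\|=\Bigl\|\sum_{j=1}^\infty x(j)z^{-j}\Bigr\|\le M\sum_{j=1}^\infty|z|^{-j}=\frac{M}{|z|-1},
\]
which tends to $0$ as $|z|\to\infty$, yielding \eqref{initial value of z-trans}. Thus the only real obstacle is the interchange of summations in (iii), and it is dispatched uniformly for every $|z|>1$ by the geometric majorant above; the remaining parts are immediate consequences of the boundedness of $(x(n))$ and elementary index manipulations.
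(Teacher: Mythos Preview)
Your proof is correct and complete; each of the four parts is handled by the standard argument, and the only nontrivial step---the rearrangement into a Cauchy product in (iii)---is properly justified by your norm estimate. The paper itself does not give a proof of this proposition at all: it simply refers the reader to \cite[Chap.~5]{ela}, so your write-up is more detailed than what the paper provides rather than different in spirit.

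One minor remark on (i): once you have shown uniform convergence in norm of the partial sums on compact subsets of $\{|z|>1\}$, holomorphy of $\tilde x$ follows directly from the fact that a locally uniform limit of Banach-space-valued holomorphic functions is holomorphic; the detour through weak holomorphy and the separating family $\X^*$ is valid but unnecessary here. In (iii) you might also note explicitly that the summability hypothesis guarantees $A*x\in l^\infty(\X)$, so that $\widetilde{A*x}(z)$ is defined for $|z|>1$; this is implicit in your argument but worth stating.
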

\begin{proof}
For the proof see e.g. \cite[Chap. 5]{ela}.
\end{proof}

\begin{lemma}\label{lem 3.1}
For a given $x:=(x(n))\in l^\infty (\X)$ let $z_0\in \Gamma$ such that $R(z,\bar S)\bar x$, as a complex function of $z$ in the domain $D_{|z|>1}:=\{ z\in\C : |z|>1\}$, has an holomorphic extension $h(z)$ to a neighborhood of $z_0\in \Gamma$. Then, $z_0$ is not in $\sigma (x)$.
\end{lemma}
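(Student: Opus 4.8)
The plan is to exploit the resolvent functional equation together with the fact that the entire open unit disk $\D$ lies in $\rho(\bar S)$; this will force a one-sided holomorphic extension of $g(z):=R(z,\bar S)\bar x$ to agree with $g$ on the \emph{other} side of $\Gamma$ as well, so that $z_0$ turns out to be a genuine removable singularity. Notice that no quantitative input such as the resolvent bound \eqref{112} or the maximum principle of Lemma~\ref{lem 4.6.6} is needed for this particular statement.

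First I would shrink the given neighborhood to a connected one, say an open disk $U=B(z_0,r)$, on which $h$ is holomorphic and $h=g$ on $U\cap D_{|z|>1}$. For $|z|>1$ we have $z\in\rho(\bar S)$, whence the resolvent identity $(z-\bar S)R(z,\bar S)\bar x=\bar x$; since $h$ coincides with $R(z,\bar S)\bar x$ there, the $\Y$-valued map $z\mapsto (z-\bar S)h(z)-\bar x$, which is holomorphic because $\bar S\in L(\Y)$ and $h$ is holomorphic, vanishes on the nonempty open set $U\cap D_{|z|>1}$.

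Next I would propagate this identity to all of $U$. For any $\phi\in\Y^*$, the scalar function $z\mapsto \langle (z-\bar S)h(z)-\bar x,\phi\rangle$ is holomorphic on the connected set $U$ and vanishes on a nonempty open subset, hence vanishes identically by the classical identity theorem; since $\Y^*$ separates the points of $\Y$ (Hahn--Banach), this yields $(z-\bar S)h(z)=\bar x$ for every $z\in U$. Now the decisive point enters: because $\bar S$ is a surjective isometry we have $\sigma(\bar S)\subset\Gamma$, so every $z\in U$ with $|z|<1$ belongs to $\rho(\bar S)$; inverting $z-\bar S$ there gives $h(z)=R(z,\bar S)\bar x=g(z)$ on $U\cap\D$ as well.

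Consequently $h$ agrees with $g$ on $U\setminus\Gamma=(U\cap D_{|z|>1})\cup(U\cap\D)$ and is holomorphic on all of $U$, so $h$ is a holomorphic extension of $g$ across $z_0$. Therefore $z_0$ is a removable singular point of $g$, that is, $z_0\notin\sigma(x)$. The only step that requires genuine care is the propagation of the functional equation from the exterior annulus to the full neighborhood via the identity theorem; once $(z-\bar S)h(z)=\bar x$ is known on all of $U$, the matching of $h$ with $g$ on the interior of the disk is automatic from the invertibility of $z-\bar S$ there.
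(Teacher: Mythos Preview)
Your argument is correct and follows essentially the same route as the paper: form $f(z)=(z-\bar S)h(z)$, use the identity theorem on the connected neighborhood to conclude $f\equiv\bar x$, and then invert $(z-\bar S)$ for $|z|<1$ to match $h$ with $R(z,\bar S)\bar x$ on the inside of $\Gamma$. Your explicit reduction to scalar functions via Hahn--Banach is a slightly more detailed justification of the vector-valued identity theorem than the paper gives, but the structure of the proof is the same.
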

\begin{proof}
Consider the function 
$$f(z):= (z-\bar S)h(z)$$
in a connected neighborhood $U(z_0)$ of $z_0$. This is a holomorphic function in $U(z_0)$. Moreover, for $z\in U(z_0) \cap D_{|z|>1}$
$$f(z)= (z-\bar S)h(z)=(z-\bar S) R(z,\bar S)\bar x =\bar x.$$
Therefore, $f(z)$ must be a constant function on $U(z_0)$. And thus, for all $z\in U(z_0)$ such that $|z|<1$ we have
$(z-\bar S)h(z)=\bar x$, this is equivalent to say that for all $z\in U(z_0)$ such that $|z|<1$
$$
h(z)=R(z,\bar S)\bar x  .
$$
This yields that $z_0$ is a regular point of $R(z,\bar S)\bar x$, so not in $\sigma ( x)$. 
\end{proof}

\begin{lemma}\label{lem z-transform}
Let $x:=(x(n))\in l^\infty (\X)$. If the Z-transform $\tilde{x}(z)$ of the sequence $x$ has an holomorphic extension to a neighborhood of $z_0\in\Gamma$, then $z_0\not\in \sigma (x).$
\end{lemma}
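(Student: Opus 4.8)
The plan is to invoke Lemma~\ref{lem 3.1}: it suffices to exhibit a holomorphic $\Y$-valued extension of $g(z):=R(z,\bar S)\bar x$, currently defined and holomorphic on $D_{|z|>1}$, to a full neighbourhood of $z_0$; Lemma~\ref{lem 3.1} then yields $z_0\notin\sigma(x)$. First I would record the link between this resolvent and the $Z$-transform. The quotient map $q:l^\infty(\X)\to\Y$ intertwines $S$ and $\bar S$, so for $|z|>1$ the Neumann series gives $g(z)=q\,R(z,S)x=\overline{w_z}$, where $w_z:=R(z,S)x=\sum_{k=0}^\infty z^{-k-1}S^kx\in l^\infty(\X)$ has coordinates $w_z(n)=\sum_{k=0}^\infty z^{-k-1}x(n+k)$. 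Iterating part (ii) of the Proposition on the $Z$-transform gives the closed form $w_z(n)=z^{n-1}\tilde x(z)-\sum_{j=0}^{n-1}z^{n-1-j}x(j)$, which displays, for each fixed $n$, the coordinate $z\mapsto w_z(n)$ as a function extending holomorphically to the neighbourhood $U$ of $z_0$ on which $\tilde x$ is assumed holomorphic (the finite sum being a polynomial in $z$). I denote these extensions by $h_n(z)$ and set $\hat w_z:=(h_n(z))_n$.

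Next I would show that $\hat w_z$ is a bounded sequence, uniformly for $z$ near $z_0$ and including points of $\Gamma$; this is the crux. For $|z|>1$ one has $\|h_n(z)\|=\|w_z(n)\|\le \|x\|_\infty/(|z|-1)$ uniformly in $n$ (so this is sharper than \eqref{112} for the vector $\bar x$), while for $|z|<1$ the closed form splits $h_n(z)$ into $z^{n-1}\tilde x(z)$, which is $O(1)$ uniformly in $n$ on compacta, and $\sum_{j=0}^{n-1}z^{n-1-j}x(j)$, bounded by $\|x\|_\infty/(1-|z|)$ uniformly in $n$. Hence on a small circle about $z_0$, off the two points where it meets $\Gamma$, one has $\|h_n(z)\|\le c/\big||z|-1\big|$ with $c$ independent of $n$. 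Applying the maximum-principle estimate of Lemma~\ref{lem 4.6.6}, transported from the imaginary axis to $\Gamma$ by the local change of variable $z\mapsto\log z$ (under which $\Gamma$ becomes the imaginary axis and $\log|z|$ plays the role of $\Re z$, so $k=1$), then produces a bound $\|h_n(z)\|\le C$ on a smaller disk $\bar B(z_0,r)$, uniformly in $n$. Thus $\hat w_z\in l^\infty(\X)$ for $z\in \bar B(z_0,r)$ with $\sup_z\|\hat w_z\|_\infty<\infty$.

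Finally I would promote $z\mapsto\hat w_z$ to a holomorphic $l^\infty(\X)$-valued map. Each $z\mapsto h_n(z)$ is holomorphic, so for every $x^*\in\X^*$ and $n\in\N$ the scalar function $z\mapsto\langle h_n(z),x^*\rangle$ is holomorphic; since the functionals $y\mapsto\langle y(n),x^*\rangle$ form a separating family on $l^\infty(\X)$, the local boundedness just obtained and the vector-valued holomorphy theorem of Section~1 give holomorphy of $z\mapsto\hat w_z$ into $l^\infty(\X)$ on $B(z_0,r)$. Composing with the bounded quotient map, $h(z):=\overline{\hat w_z}$ is holomorphic on $B(z_0,r)$ and, by construction, equals $g(z)=R(z,\bar S)\bar x$ on $B(z_0,r)\cap D_{|z|>1}$. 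This is the required extension, and Lemma~\ref{lem 3.1} completes the argument.

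The main obstacle is precisely the uniform-in-$n$ control across $\Gamma$ in the second step: the two terms of the closed form for $h_n(z)$ individually grow (linearly or exponentially) as one crosses the unit circle, and only their combination remains bounded, so a direct contour-integral bound overestimates the growth. It is the maximum principle, Lemma~\ref{lem 4.6.6}, that is indispensable here, converting the resolvent-type blow-up $c/\big||z|-1\big|$ — known to hold uniformly in $n$ — into a genuine uniform bound on a smaller disk, which is what makes $\hat w_z$ an honest element of $l^\infty(\X)$ up to and across $z_0$.
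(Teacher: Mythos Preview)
Your proposal is correct and follows essentially the same route as the paper's proof: you derive the same closed form $h_n(z)=z^{n-1}\tilde x(z)-\sum_{j=0}^{n-1}z^{n-1-j}x(j)$, obtain the same $c/\big||z|-1\big|$ estimate on either side of $\Gamma$, apply Lemma~\ref{lem 4.6.6} after the change of variable $z=e^{\xi}$ to get a uniform-in-$n$ bound, and then invoke the separating-functionals holomorphy criterion and Lemma~\ref{lem 3.1}. Your commentary on why the maximum principle is indispensable matches exactly the crux of the paper's argument.
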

\begin{proof} Assume that $\tilde{x}(z)$ (with $|z|> 1$) can be extended to a holomorphic function $g_0(z)$ in $B(z_0,\delta )$ with a sufficiently small positive $\delta$. We will show that $R(z,S)x$ (with $|z|>1$) has an holomorphic extension in a neighborhood of $z_0$.
For $|z|>1$ we have
\begin{eqnarray}
R(z,S)x &:=& (z-S)^{-1}x
  =  z^{-1}(I-z^{-1}S)^{-1}x \nonumber  \\
& =& z^{-1}\sum_{n=0}^{\infty}z^{-n}S(n)x
  = \sum_{n=0}^{\infty}z^{-n-1}S(n)x \nonumber
\end{eqnarray}
For each $k\in \N$ and $z\in B(z_0,\delta ) $, $|z|>1$, by the definition of the Z-transform
\begin{eqnarray}
[R(z,S)x] (k) & =& \sum_{n=0}^{\infty}z^{-n-1}x(n+k)
 =  z^{-1} \widetilde{S(k)x}(z) \label{3.100}\\
&=&  z^{-1}  \left( z^k\tilde {x}(z)-\sum_{j=0}^{k-1} z^{k-j}x(j)    \right)  \nonumber\\
&=& z^{k-1}g_0(z) -\sum_{j=0}^{k-1} z^{k-1-j}x(j).\nonumber
 \end{eqnarray}
 Below we will denote
 \begin{equation}
 g_k(z):=z^{k-1}g_0(z) -\sum_{j=0}^{k-1} z^{k-1-j}x(j), \quad z\in B(z_0,\delta ).
 \end{equation}
 For each $k\in\N$, the complex function $[R(z,S)x] (k)$ (with $|z|>1$) can be extended to a complex function $g_k(z)$ in $B(z_0,\delta )$. To show that the function $R(z,S)x$ (with $|z|>1$) is extendable to a holomorphic function in  a neighborhood of $z_0$ we first prove that
 \begin{equation}\label{boundedness of g(z)}
  \sup_{z\in B(z_0,r)} \| g(z)\| <\infty ,
 \end{equation}
 where $r$ is a certain small positive number, and the function $g(z)$ is defined as
 \begin{equation}
 B(z_0,\delta )\ni z \mapsto g(z):= \{ g_k (z)\} _{k=0}^\infty \in l^\infty (\X).
 \end{equation}
Note that for all $k\in\N$, $g_k$ is holomorphic in $B(z_0,\delta )$.
If $z\in B(z_0,\delta)$ and $|z|>1$, then
\begin{eqnarray}
\|g_k(z)\|&=& \| [R(z,S)x] (k)\|
  =  \| z^{-1} \widetilde{S(k)x}(z)\| \nonumber\\
&\le&  | z^{-1} | \cdot \sum_{j=0}^\infty |z|^{-j}\|x(j+k)\| \nonumber\\
&\le&   | z^{-1} | \cdot \sum_{j=0}^\infty |z|^{-j} | \cdot \| x\|
 =  \frac{|z|^{-1}}{1-|z|^{-1}}\| x\| \nonumber\\
&=& \frac{1}{|z|-1 }\| x\| \label{3.14}
\end{eqnarray}
If if $z\in B(z_0,\delta )$ and $|z|<1$, then, for each $k\in\N$,
\begin{eqnarray}
\|g_k(z)\| &=&\|
 z^{k-1}g_0(z) -\sum_{j=0}^{k-1} x(j)z^{k-1-j}\| \nonumber\\
 &\le& \sup_{\lambda \in B(z_0,\delta  )} \| g_0(\lambda)\| \cdot |z|^{k-1}+\sum_{j=0}^{k-1}|z|^{k-1-j}\|x\| \nonumber \\
 &\le& \left(\sup_{\lambda \in B(z_0,\delta_  )} \| g_0(\lambda)\| +\| x\| \right) \sum_{j=0}^\infty |z|^j\nonumber\\
 &=&\frac{1}{1-|z|}\left(\sup_{\lambda \in B(z_0,\delta  )} \| g_0(\lambda)\| +\| x\| \right)\label{3.15}
\end{eqnarray}
Now (\ref{3.14}) combined with (\ref{3.15}) shows that  there is a positive constant $C$ independent of $n$ and $z$ such that if $z\in B(z_0,\delta)\backslash \Gamma$, then
\begin{equation}\label{3.10}
\| g_n (z)\| \le \frac{C}{|1-|z||}.
\end{equation}
Next, we will use the special maximum principle as stated in  Lemma \ref{lem 4.6.6}. Let us choose a positive $\delta_1$ such that if
$|x |<\delta_1$, where $x,y$ are reals, and $\lambda =e^{x+iy}$, then
$$
\frac{1}{|1-|\lambda||} \le \frac{2}{|x|}.
$$
Assuming $z_0=e^{i\theta_0}$, we can choose $r>0$ such that $2r <\delta_1$, and   $g_n(e^\xi )$ is
analytic in $\xi \in B(i\theta_0 , 2r)$ for all $n$.
Therefore,  (\ref{3.10}) becomes
\begin{equation}\label{3.111}
\| g_n (e^\xi)\| \le \frac{2C}{|\Re \xi |}, \quad  \xi\in B(i\theta_0, 2r)\backslash i\R .
\end{equation}
By  Lemma \ref{lem 4.6.6}, (\ref{3.111}) yields that
\begin{equation}
\| g_n (e^\xi )\| \le \frac{4}{3}\cdot \frac{2C}{r}=\frac{8C}{3r}, \quad \mbox{for all} \ \xi \in B(i\theta_0, r).
\end{equation}
We can choose $\delta_2>0$ such that $B(z_0,\delta_2) \subset \{ e^\xi: \xi \in B(i\theta_0 ,4r)\} .$ The existence of such a positive $\delta_2$ is guaranteed because the function $z\mapsto e^z$ is a homeomorphism from a neighborhood of $i\theta $ onto a neighborhood of $z_0$.
This yields in particular that
\begin{equation}
\| g_n (z )\| \le  \frac{8C}{3r}, \quad \mbox{for all} \ z \in B(z_0,\delta_2).
\end{equation}
Since $C$ is independent of $n$ this proves (\ref{boundedness of g(z)}), that is, $g(z)$ is bounded  in $B(z_0,\delta_2)$. This is sufficient to deduce that $g(z)$ is holomorphic in $B(z_0,\delta_2)$.  In fact, for each $x^*\in \X^*$ and $k\in\N$ with $p_k: l^\infty (\X) \to \X$ defined as $p_k ( \{x(n)\}_{n=0}^\infty )=x(k)$ for all $\{ x(n)\}_{n=0}^\infty\in l^\infty (\X)$, the scalar function $x^*\circ p_k \circ g(\cdot )=x^*\circ g_k(\cdot )$ is holomorphic in $B(z_0,\delta_2)$. Since the family of continuous functionals $\{ x^*\circ p_k, x^*\in \X^*, k\in \N\}$ is separating,
by \cite[Theorem A.7, p. 459]{arebathieneu} the function $g(z)$ is holomorphic in $z\in B(z_0,\delta_2)$, that is, $R(z,S)x$ with $|z|>1$ has a holomorphic extension $g(z)$ in $B(z_0,\delta_2)$. Next, this yields that $R(z,\bar S)\bar x$ with $|z|>1$ has a holomorphic extension in $B(z_0,\delta_2)$. By Lemma \ref{lem 3.1} $z_0$ is not in $\sigma (x)$.
\end{proof}
\begin{definition}
We denote by $\sigma_Z ( x)$ the set of all points $\xi_0$ on $\Gamma$ such that the Z-transform of a sequence $x:=(x(n))\in l^\infty (\X)$ cannot be extended holomorphically to any neighborhood of $\xi_0$, and call this set the Z-spectrum of the sequence $x$.
\end{definition}
\begin{corollary}\label{cor 1}
Let $x:=(x(n))\in l^\infty (\X)$. Then
\begin{equation}
\sigma (x) \subset \sigma_Z(x).
\end{equation}
\end{corollary}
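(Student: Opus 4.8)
The plan is to prove the inclusion by contraposition, reading off the conclusion directly from Lemma \ref{lem z-transform}, which has already done all of the analytic work. First I would observe that both sets under comparison live on the unit circle, so it suffices to argue pointwise on $\Gamma$. For $\sigma_Z(x)$ this is immediate from its definition. For $\sigma (x)$ it follows because $\bar S$ is a surjective isometry, whence $\sigma (\bar S)\subset \Gamma$ and the map $g(\lambda ):= R(\lambda ,\bar S)\bar x$ is holomorphic for every $\lambda$ with $|\lambda |\neq 1$; thus all of its non-removable singular points, i.e. all of $\sigma (x)$, are located on $\Gamma$.

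Having reduced to $\Gamma$, I would fix a point $z_0\in \Gamma$ and verify the implication $z_0\notin \sigma_Z(x)\Rightarrow z_0\notin \sigma (x)$. By the definition of the Z-spectrum, $z_0\notin \sigma_Z(x)$ means precisely that the Z-transform $\tilde{x}(z)$ admits a holomorphic extension to some neighborhood of $z_0$. This is exactly the hypothesis of Lemma \ref{lem z-transform}, which then yields $z_0\notin \sigma (x)$. Taking the contrapositive gives: every $z_0\in \sigma (x)$ satisfies $z_0\in \sigma_Z(x)$, which is the desired inclusion $\sigma (x)\subset \sigma_Z(x)$.

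I do not anticipate any genuine obstacle here, since the substantive content, namely the transfer of a holomorphic extension of $\tilde{x}(z)$ into a holomorphic extension of $R(z,\bar S)\bar x$ across $\Gamma$, is precisely what Lemma \ref{lem z-transform} establishes. The only point requiring a moment of care is the bookkeeping about domains: one must confirm that the hypothesis of Lemma \ref{lem z-transform} matches the complement of $\sigma_Z(x)$ verbatim, and that the spectrum $\sigma (x)$ is indeed confined to $\Gamma$ so that points off the circle never enter the comparison. Both of these are routine consequences of the definitions and of the resolvent estimate (\ref{112}) for the isometry $\bar S$, so the corollary follows in a single line once the reduction to $\Gamma$ is in place.
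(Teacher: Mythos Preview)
Your proposal is correct and follows essentially the same approach as the paper, which simply states that the corollary is an immediate consequence of Lemmas \ref{lem 3.1} and \ref{lem z-transform}. Your contrapositive reading of Lemma \ref{lem z-transform} is exactly the content of the paper's one-line proof; the extra remarks you make about both spectra living on $\Gamma$ are true but not needed, since Lemma \ref{lem z-transform} already delivers $z_0\notin\sigma(x)$ directly.
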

\begin{proof}
The corollary is an immediate consequence of Lemmas \ref{lem 3.1} and \ref{lem z-transform}
\end{proof}
\begin{remark}
In general, $\sigma (x)\not =\sigma_Z(x)$. Consider the following numerical sequence
$x:=(x(n))\in l^\infty (\R),$ where
\begin{eqnarray*}
 x(n):= \begin{cases} 0, \quad n=0\\
   1/n  , \quad n\in \N \backslash \{ 0\} .
   \end{cases}
 \end{eqnarray*}
Obviously, $\bar x =0$, so $\sigma (x)=\emptyset$. However, $ 1 \in \sigma_Z(x)$ because
$
\tilde{x}(z) =\sum_{j=1}^\infty  z^{-j} /j
$
cannot be extended holomorphically to a neighborhood of $1$.
\end{remark}
As an immediate consequence of Theorem \ref{the tec} and Corollary \ref{cor 1} we have
\begin{theorem}\label{the 2}
Let $(x(n))$ be a bounded sequence such that $\sigma_Z ( x)$  is countable, and let the following condition hold for
each $\xi_0\in \sigma _Z(x)$
\begin{equation}
\lim_{\lambda \downarrow \xi_0} (\lambda -\xi_0 )R(\lambda ,\bar
S)\bar x =0 .
\end{equation}
Then, $ \lim_{n\to\infty}x(n)=0.$
\end{theorem}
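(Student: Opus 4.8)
The plan is to derive this as a direct corollary of Theorem \ref{the tec} by reducing the hypotheses about the Z-spectrum $\sigma_Z(x)$ to the corresponding hypotheses about the reduced spectrum $\sigma(x)$, using the containment established in Corollary \ref{cor 1}. The key observation is that Theorem \ref{the tec} already delivers the conclusion $\lim_{n\to\infty}x(n)=0$ from two assumptions on $\sigma(x)$: that it is countable, and that the resolvent decay condition $\lim_{\lambda\downarrow\xi_0}(\lambda-\xi_0)R(\lambda,\bar S)\bar x=0$ holds at each of its points. So the entire task is to transfer both of these assumptions from $\sigma_Z(x)$ to $\sigma(x)$.

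First I would invoke Corollary \ref{cor 1}, which gives $\sigma(x)\subset\sigma_Z(x)$. Since $\sigma_Z(x)$ is assumed countable by hypothesis, its subset $\sigma(x)$ is countable as well; this verifies the first assumption of Theorem \ref{the tec}. Second, the hypothesis of the present theorem asserts that the limit condition $\lim_{\lambda\downarrow\xi_0}(\lambda-\xi_0)R(\lambda,\bar S)\bar x=0$ holds for every $\xi_0\in\sigma_Z(x)$. Because every point of $\sigma(x)$ lies in $\sigma_Z(x)$ by the containment just cited, this condition in particular holds for every $\xi_0\in\sigma(x)$, which verifies the second assumption of Theorem \ref{the tec}.

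With both hypotheses of Theorem \ref{the tec} checked for the sequence $(x(n))$, I would apply that theorem directly to conclude $\lim_{n\to\infty}x(n)=0$, which is exactly the desired statement.

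There is no genuine obstacle at this stage: the argument is a two-line deduction once Corollary \ref{cor 1} is in hand. The real work has already been carried out in Lemma \ref{lem z-transform}, whose maximum-principle estimate via Lemma \ref{lem 4.6.6} is what makes the inclusion $\sigma(x)\subset\sigma_Z(x)$ available. The only point meriting a moment's care is that the inclusion runs in the direction $\sigma(x)\subset\sigma_Z(x)$ and not the reverse, so that assuming countability and the resolvent condition on the larger set $\sigma_Z(x)$ is a strengthening of the corresponding hypotheses of Theorem \ref{the tec}; this is precisely what allows the reduction to go through, and it is why the theorem is stated as a consequence rather than an equivalence.
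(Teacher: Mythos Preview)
Your proposal is correct and matches the paper's own treatment: the paper states this theorem as an immediate consequence of Theorem~\ref{the tec} and Corollary~\ref{cor 1}, and your argument spells out precisely that deduction.
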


\section{Volterra Equations of Convolution Form}
This section will be devoted to various applications of Theorems \ref{the tec} and \ref{the 2} to Volterra equations of the form (\ref{eq}) and (\ref{eq1}).
\begin{definition}
We say that a point $z_0\in \Gamma$ is regular with respect to Eq. (\ref{eq}) if there exists a neighborhood of $z_0$ in the complex plane in which
the operator
\begin{equation}
(z-A-\tilde{B}(z))^{-1}
\end{equation}
exists as an element of $L(\X)$ for each fixed $z$, and  as a  function of $z$, is holomorphic. We denote by $\Sigma$ the set of all points in $\Gamma$ that are not regular with respect to (\ref{eq}).
\end{definition}
Let us consider a bounded solution $x:=(x(n))$ to Eq. (\ref{eq1}) under condition (\ref{bounded eq}). For each $|z|>1$ taking the Z-transform of both sides of Eq. (\ref{eq}) gives
\begin{equation}\label{3.2}
z\tilde{x}(z)-zx(0)= A\tilde{x}(z)+\tilde{B}(z)\tilde{x}(z)+\tilde{y}(z).
\end{equation}
For each $z_0\in \Gamma \backslash \Sigma$, by (\ref{3.2}) we have
\begin{equation}\label{2.5}
\tilde{x}(z)=(z -A-\tilde{B}(z))^{-1} (zx(0)+\tilde{y}(z)),
\end{equation}
where $z$ is in a neighborhood of $z_0$.
As an immediate consequence of Corollary \ref{cor 1} we have the following
\begin{lemma}\label{lem spec} Let $x:=(x(n))$ be a bounded solution of
equation (\ref{eq1}). Then,
\begin{equation}\label{spec estimate}
\sigma (x) \subset \sigma_Z(x)\subset   \Sigma \cup \sigma_Z(y).
\end{equation}
\end{lemma}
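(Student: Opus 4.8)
The plan is to prove the two inclusions in (\ref{spec estimate}) separately. The left inclusion $\sigma(x) \subset \sigma_Z(x)$ requires no new work: it is exactly the content of Corollary \ref{cor 1}, which holds for every bounded sequence and in particular for our solution $x$. So the entire task reduces to establishing the right inclusion $\sigma_Z(x) \subset \Sigma \cup \sigma_Z(y)$.

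For the right inclusion I would argue by contraposition, showing that any $\xi_0 \in \Gamma$ lying outside $\Sigma \cup \sigma_Z(y)$ also lies outside $\sigma_Z(x)$; that is, that $\tilde x$ admits a holomorphic extension to a neighborhood of $\xi_0$. Fix such a $\xi_0$. Because $\xi_0 \notin \Sigma$, the definition of regularity provides a neighborhood $U$ of $\xi_0$ on which $(z - A - \tilde B(z))^{-1}$ exists in $L(\X)$ and depends holomorphically on $z$. Because $\xi_0 \notin \sigma_Z(y)$, after shrinking $U$ we may also assume that $\tilde y$ extends to a holomorphic $\X$-valued function on $U$.

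The bridge between these hypotheses and $\tilde x$ is the representation (\ref{2.5}), which I would obtain by applying the Z-transform to (\ref{eq1}) — using parts (ii) and (iii) of the Z-transform proposition together with condition (\ref{bounded eq}) to justify the convolution identity $\widetilde{B*x}(z) = \tilde B(z)\tilde x(z)$ — to arrive at (\ref{3.2}), and then inverting the operator just produced. Thus, for $z \in U$ with $|z|>1$,
\begin{equation}
\tilde x(z) = (z - A - \tilde B(z))^{-1}\bigl(z x(0) + \tilde y(z)\bigr).
\end{equation}
The right-hand side is holomorphic on all of $U$: it is the product of the $L(\X)$-valued holomorphic function $(z - A - \tilde B(z))^{-1}$ with the $\X$-valued holomorphic function $z \mapsto z x(0) + \tilde y(z)$, the term $z x(0)$ being entire and $\tilde y$ being holomorphic on $U$ by the choice above. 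Since this holomorphic function agrees with $\tilde x$ on the nonempty open set $U \cap \{|z|>1\}$, it is the desired holomorphic extension of $\tilde x$ across $\xi_0$, whence $\xi_0 \notin \sigma_Z(x)$.

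I expect the only genuinely delicate point to be the passage from ``(\ref{2.5}) holds for $|z|>1$'' to ``$\tilde x$ extends holomorphically across $\Gamma$'': one must verify that the manifestly holomorphic right-hand side really continues $\tilde x$ into the region $|z|<1$, which follows from the uniqueness of analytic continuation once one notes the agreement on the overlap $U \cap \{|z|>1\}$. A related bookkeeping matter is that the resolvent $(z - A - \tilde B(z))^{-1}$, and hence $\tilde B$ itself, must make sense holomorphically on a full two-dimensional neighborhood of $\xi_0$ rather than merely on $|z|\ge 1$; this is not an extra assumption but is built into the hypothesis $\xi_0 \notin \Sigma$, since regularity of $\xi_0$ presupposes the holomorphic existence of that resolvent near $\xi_0$. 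Everything else is a routine combination of the definitions of $\Sigma$ and $\sigma_Z$ with Corollary \ref{cor 1}.
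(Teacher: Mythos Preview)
Your proof is correct and follows essentially the same route as the paper: the paper derives formula (\ref{2.5}) immediately before stating the lemma and then declares the lemma ``an immediate consequence of Corollary~\ref{cor 1},'' leaving implicit precisely the contrapositive argument for the second inclusion that you have written out in full. Your additional remarks on analytic continuation across $\Gamma$ and on the two-sided holomorphy of $(z-A-\tilde B(z))^{-1}$ near a regular point are accurate clarifications of points the paper takes for granted.
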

Recall that the resolvent sequence $\{ X(n)\}_{n=0}^\infty \subset L(\X)$ is defined as the operator solution of (\ref{eq}), that is,
\begin{equation}
X(n+1)=AX(n)+\sum_{j=0}^n B(n-j)X(j),
\end{equation}
such that $X(0)=I$ (the identity operator in $\X$). The existence and uniqueness of such a resolvent sequence can be easily proved. Every solution $(x(n))$ of (\ref{eq}) with initial value $x(0)$ can be represented as $x(n)=X(n)x(0)$ for all $n\in\N$.
\begin{example}
If $B(n)=0$ for all $n\in \N$, then $(X(n))=(A^n)$ is the resolvent sequence of (\ref{eq}).
\end{example}
\begin{definition}
Eq. (\ref{eq}) is said to be  asymptotically stable if
$
\lim_{n\to\infty} X(n) =0.
$
Eq. (\ref{eq}) or Eq. (\ref{eq1}) is said to be strongly asymptotically stable if for every solution $(x(n))$ of the equation $lim_{n\to\infty}x(n)=0$. 
A sequence $x:=(x(n)) $ is said to be asymptotically stable if $\lim_{n\to\infty}x(n)=0$.
\end{definition}
\begin{theorem}\label{the 4.6}
Let the set $\Sigma$ associated with Eq. (\ref{eq}) be countable.
Then, the following assertions hold:
\begin{enumerate}
\item
A bounded solution $x:=(x(n))$ of Eq. (\ref{eq}) is  asymptotically stable if for each $\xi_0 \in \Sigma$
\begin{equation}\label{4.7}
\lim_{\lambda \downarrow \xi_0} (\lambda -\xi_0 )R(\lambda ,
\bar S)  \bar x  =0 ;
\end{equation}
\item Eq. (\ref{eq}) is  asymptotically stable if $(X(n))$ is bounded, and for each $\xi_0 \in \Sigma$, the following limit holds in $l^\infty (L(\X))$
\begin{equation}\label{4.8}
\lim_{\lambda \downarrow \xi_0} (\lambda -\xi_0 )R(\lambda ,
\bar S)  \bar X   =0 .
\end{equation}
\end{enumerate}
\end{theorem}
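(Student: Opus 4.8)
The plan is to obtain both assertions directly from the abstract Tauberian statement Theorem \ref{the tec}, once the relevant spectrum has been confined to the countable set $\Sigma$.

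For part (i), I would view the bounded solution $x := (x(n))$ of (\ref{eq}) as a bounded solution of (\ref{eq1}) with forcing term $y = 0$. Since the zero sequence has empty $Z$-spectrum, Lemma \ref{lem spec} gives the inclusion $\sigma(x) \subset \sigma_Z(x) \subset \Sigma$. As $\Sigma$ is assumed countable, $\sigma(x)$ is countable as well. The decay hypothesis (\ref{4.7}) is postulated for every $\xi_0 \in \Sigma$, hence a fortiori for every $\xi_0 \in \sigma(x)$; this is precisely the hypothesis of Theorem \ref{the tec} for the sequence $x$, and that theorem then yields $\lim_{n\to\infty} x(n) = 0$, i.e. asymptotic stability of $x$.

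For part (ii), the key observation is that the resolvent sequence $(X(n))$ is, by assumption, a bounded sequence in the Banach space $L(\X)$, so the whole apparatus of the previous sections applies verbatim with $\X$ replaced by $L(\X)$: one forms the quotient $l^\infty(L(\X))/c_0(L(\X))$, the induced surjective isometry $\bar S$, the class $\bar X$, and the spectra $\sigma(X), \sigma_Z(X) \subset \Gamma$. To localize $\sigma_Z(X)$ I would take the $Z$-transform of the operator identity $X(n+1) = AX(n) + \sum_{j=0}^n B(n-j)X(j)$, $X(0) = I$; the shift and convolution rules (the latter justified by (\ref{bounded eq})) give $(z - A - \tilde{B}(z))\tilde{X}(z) = zI$, so that $\tilde{X}(z) = z(z - A - \tilde{B}(z))^{-1}$ for $|z| > 1$. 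For each $z_0 \in \Gamma \setminus \Sigma$ the right-hand side extends holomorphically across $z_0$ by the very definition of $\Sigma$, exactly as in the proof of Lemma \ref{lem spec}; the $L(\X)$-version of Corollary \ref{cor 1} then gives $\sigma(X) \subset \sigma_Z(X) \subset \Sigma$, which is countable. Since (\ref{4.8}) furnishes the decay condition for every $\xi_0 \in \Sigma \supseteq \sigma(X)$, the $L(\X)$-version of Theorem \ref{the tec} applies to $(X(n))$ and yields $\lim_{n\to\infty} X(n) = 0$ in the operator norm, which is the asymptotic stability of (\ref{eq}).

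The step I would treat with the most care is this passage to the operator-valued setting in part (ii). All of Sections 2 and 3 were phrased for sequences in a fixed complex Banach space, and since $L(\X)$ is such a space the constructions and the proofs of Lemma \ref{lem z-transform}, Corollary \ref{cor 1} and Theorem \ref{the tec} transfer unchanged; I would only make explicit that the separating family used in Lemma \ref{lem z-transform} is now $\{\phi \circ p_k : \phi \in L(\X)^*,\ k \in \N\}$ and that the resolvent bound (\ref{112}) for the induced isometry does not depend on the ground Banach space. I would also flag that a pointwise reduction, applying $X(n)$ to a fixed $x_0$ and quoting part (i), would only produce strong stability $X(n)x_0 \to 0$ and not the norm convergence $\|X(n)\| \to 0$ demanded by the definition; this is exactly why the genuinely operator-valued argument above, rather than part (i), is needed.
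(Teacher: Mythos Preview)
Your proof is correct and follows essentially the same route as the paper, which records the theorem as an immediate consequence of Theorem~\ref{the 2} and Lemma~\ref{lem spec}. Your only addition is to make explicit that in part~(ii) one works in the Banach space $L(\X)$ so that the machinery of Sections~2--3 applies verbatim to $(X(n))$; the paper leaves this implicit.
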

\begin{proof}
This theorem is an immediate consequence of Theorem \ref{the 2} and Lemma \ref{lem spec}.
\end{proof}
\begin{remark}
\begin{enumerate}
\item
 As shown in \cite{min}, if $B(n)=0$ for all $n\in\N$, then (\ref{4.7}) follow from the condition that $R\sigma (A) \cap \Gamma=\emptyset$. In the general case (\ref{4.7}) follows from (\ref{erg cond of V}) below.
 \item $(X(n))$ is bounded if $\| A\| +\sum_{n=0}^\infty \| B(n)\|  \le 1$.
\item Condition (\ref{4.8}) can be easily checked in some interesting cases  as below.
 \end{enumerate}
 \end{remark}
 \begin{corollary}\label{cor 3}
Let $\Sigma =\emptyset$, that is, $(z-A-\tilde{B}(z))^{-1}$ exists and holomorphic in 
$\{ z\in \C: |z|\ge 1\}$. Assume further that $(X(n))$ is bounded. Then, Eq. (\ref{eq}) is asymptotically stable.
\end{corollary}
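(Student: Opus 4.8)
The plan is to obtain the corollary as the degenerate case $\Sigma=\emptyset$ of Theorem \ref{the 4.6}(ii), checking that all of its hypotheses hold trivially here. First I would observe that the empty set is countable, so the standing assumption of Theorem \ref{the 4.6} that $\Sigma$ be countable is met. The boundedness of $(X(n))$ is assumed. The only substantive-looking condition of part (ii), namely (\ref{4.8}), is required to hold for each $\xi_0\in\Sigma$; since $\Sigma=\emptyset$ there is no such $\xi_0$, so this requirement is vacuously satisfied. Theorem \ref{the 4.6}(ii) then yields $\lim_{n\to\infty}X(n)=0$, which is exactly asymptotic stability of Eq. (\ref{eq}).

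To make the mechanism transparent I would alternatively run the spectral argument directly on the operator-valued resolvent sequence. Treating $(X(n))$ as a bounded sequence in the Banach space $L(\X)$, I would take its Z-transform. From the defining recursion $X(n+1)=AX(n)+\sum_{j=0}^n B(n-j)X(j)$ with $X(0)=I$, together with the shift rule $\widetilde{SX}(z)=z\tilde{X}(z)-zX(0)$ and the convolution rule $\widetilde{B*X}(z)=\tilde{B}(z)\tilde{X}(z)$, one obtains $(z-A-\tilde{B}(z))\tilde{X}(z)=zI$, hence
\[
\tilde{X}(z)=z\,(z-A-\tilde{B}(z))^{-1}.
\]
This function is holomorphic for $|z|>1$, and because $\Sigma=\emptyset$ the operator $(z-A-\tilde{B}(z))^{-1}$ extends holomorphically across every point of $\Gamma$; therefore so does $\tilde{X}(z)$. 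Consequently $\sigma_Z(X)=\emptyset$, and Corollary \ref{cor 1} gives $\sigma(X)=\emptyset$. Finally Lemma \ref{lem 4}(i) forces $X\in c_0(L(\X))$, that is, $\lim_{n\to\infty}X(n)=0$ in operator norm.

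The only point requiring care, and the place I would pay most attention, is the passage from the vector-valued spectral theory of Sections \ref{section 1}--3 to the operator-valued sequence $(X(n))$: one applies that theory with the scalar Banach space $\X$ replaced by $L(\X)$, and notes that the characteristic operator $(z-A-\tilde{B}(z))^{-1}$ governing $\Sigma$ is identical for the operator equation and the vector equation, so the obstruction set $\Sigma$ is unchanged. Once this identification is in place, no genuine difficulty remains; the corollary is essentially a reading of Theorem \ref{the 4.6}(ii) in the case of an empty spectral obstruction.
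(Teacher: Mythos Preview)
Your first paragraph is exactly the paper's proof: $\Sigma=\emptyset$ is countable, condition (\ref{4.8}) is vacuous, and Theorem \ref{the 4.6}(ii) gives $\lim_{n\to\infty}X(n)=0$. The alternative direct argument you add (computing $\tilde X(z)=z(z-A-\tilde B(z))^{-1}$, concluding $\sigma_Z(X)=\emptyset$, and invoking Corollary \ref{cor 1} and Lemma \ref{lem 4}(i) with $\X$ replaced by $L(\X)$) is correct and makes the mechanism behind Theorem \ref{the 4.6} explicit, but it is not needed and is not in the paper.
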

\begin{proof}
Since $\Sigma =\emptyset$, condition (\ref{4.8}) is automatically satisfied. Therefore, by Theorem \ref{the 4.6}, $\lim_{n\to\infty} X(n)=0$, that is, Eq. (\ref{eq}) is asymptotically stable.
\end{proof}

\begin{theorem}\label{the extension of K_T Thrm}
Assume that  $\Sigma\subset \{ 1\}$ and the resolvent sequence $(X(n))$ is bounded. Then
\begin{equation}\label{4.10}
\lim_{n\to\infty} [X(n+1)-X(n)]=0 .
\end{equation}
\end{theorem}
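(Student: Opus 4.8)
The plan is to apply Theorem \ref{the tec} to the operator-valued sequence $Y:=(Y(n))$ defined by $Y(n):=X(n+1)-X(n)$, treating $L(\X)$ as the underlying Banach space (exactly as $\bar X$ is treated in Theorem \ref{the 4.6}(ii)). Boundedness of $(X(n))$ gives boundedness of $(Y(n))$, and since the $n$-th entry of $SX$ is $X(n+1)$, in the quotient $\Y:=l^\infty(L(\X))/c_0(L(\X))$ we have $\bar Y=(\bar S-I)\bar X$. The whole argument then rests on the single resolvent identity
\begin{equation}\label{KTident}
R(\lambda,\bar S)\bar Y=R(\lambda,\bar S)(\bar S-I)\bar X=-\bar X+(\lambda-1)R(\lambda,\bar S)\bar X,
\end{equation}
obtained by writing $\bar S-I=-(\lambda-\bar S)+(\lambda-1)I$ and using $R(\lambda,\bar S)(\lambda-\bar S)=I$. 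It then remains to verify the two hypotheses of Theorem \ref{the tec} for $Y$: countability of $\sigma(Y)$ and the resolvent limit condition at each of its points.

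First I would locate the spectrum. Taking the $Z$-transform of the operator equation $X(n+1)=AX(n)+\sum_{j=0}^nB(n-j)X(j)$, $X(0)=I$, yields $(z-A-\tilde B(z))\tilde X(z)=zI$, so $\tilde X(z)=z(z-A-\tilde B(z))^{-1}$ is holomorphic across every point of $\Gamma\setminus\Sigma$; hence by Corollary \ref{cor 1} (the operator analogue of Lemma \ref{lem spec} with $y=0$) one gets $\sigma(X)\subset\sigma_Z(X)\subset\Sigma\subset\{1\}$. Because the right-hand side of (\ref{KTident}) is holomorphic in $\lambda$ at every point where $R(\lambda,\bar S)\bar X$ is, the non-removable singularities of $R(\lambda,\bar S)\bar Y$ form a subset of those of $R(\lambda,\bar S)\bar X$; that is, $\sigma(Y)\subset\sigma(X)\subset\{1\}$, which is countable. (If $\sigma(Y)=\emptyset$ we are already done by Lemma \ref{lem 4}(i).)

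The crux, and the step I expect to be the main obstacle, is the limit condition of Theorem \ref{the tec} at the sole candidate point $\xi_0=1$. Multiplying (\ref{KTident}) by $(\lambda-1)$ gives
\begin{equation}
(\lambda-1)R(\lambda,\bar S)\bar Y=-(\lambda-1)\bar X+(\lambda-1)^2R(\lambda,\bar S)\bar X .
\end{equation}
Letting $\lambda\downarrow 1$, which by the convention preceding Theorem \ref{the tec} means $\lambda=s$ real with $s\downarrow 1$, the first term vanishes in the limit, while the isometry estimate (\ref{112}) bounds the second by $(\lambda-1)^2\,\|\bar X\|/(|\lambda|-1)=(\lambda-1)\|\bar X\|\to 0$ (using $|\lambda|=\lambda$ for real $\lambda>1$). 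Thus $\lim_{\lambda\downarrow 1}(\lambda-1)R(\lambda,\bar S)\bar Y=0$. Conceptually this is the discrete Katznelson--Tzafriri mechanism: the extra factor $\bar S-I$ produced by passing to the difference $X(n+1)-X(n)$ supplies exactly the one additional power of $(\lambda-1)$ needed to overcome the $O(1/(\lambda-1))$ growth of the resolvent at $1$, even when the bare condition (\ref{4.8}) for $\bar X$ fails. With both hypotheses in hand, Theorem \ref{the tec} yields $\lim_{n\to\infty}Y(n)=\lim_{n\to\infty}[X(n+1)-X(n)]=0$, which is (\ref{4.10}).
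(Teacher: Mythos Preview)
Your proof is correct and follows essentially the same route as the paper: both first use Lemma \ref{lem spec} (via the $Z$-transform of the resolvent equation) to obtain $\sigma(X)\subset\sigma_Z(X)\subset\Sigma\subset\{1\}$, and then deduce (\ref{4.10}) from this spectral inclusion. The paper at that point simply cites \cite[Theorem 2.1]{min} for the implication ``$\sigma(x)\subset\{1\}\Rightarrow\lim_{n\to\infty}[x(n+1)-x(n)]=0$'', whereas you spell out that argument explicitly via the resolvent identity (\ref{KTident}) and Theorem \ref{the tec}; your computation showing that the extra factor $(\bar S-I)$ supplies the additional power of $(\lambda-1)$ needed to beat the estimate (\ref{112}) is exactly the mechanism behind the cited result.
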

\begin{proof}
By Lemma \ref{lem spec} we have $\sigma (X) \subset \sigma_Z(X) \subset \Sigma \subset \{ 1\}$, so an elementary proof of this theorem can be taken from that of \cite[Theorem 2.1]{min} that actually shows that if a sequence $x:=(x(n))\in l^\infty (\X)$ has $\sigma (x) \subset \{ 1\}$, then $\lim_{n\to\infty} [x(n+1)-x(n)]=0$.
\end{proof}
\begin{remark}
Theorem \ref{the extension of K_T Thrm} is an extension of the Katznelson-Tzafriri Theorem \cite[Theorem 1]{kattza}. In fact, when $B(n)=0$ for all $n\in\N$, then $X(n)=A^n$. Therefore, the operator $A$ is power bounded if and only if $(X(n))\in l^\infty (L(\X))$.
\end{remark}

\bigskip
For homogeneous equations (\ref{eq}) the spectral estimate (\ref{spec estimate}), that involves Z-spectrum of a bounded solution, plays an important role. However, for inhomogeneous equations (\ref{eq1}) there seems to be some inconvenience. In fact, let us consider the very simple inhomogeneous equation of the form $x(n+1) = y(n)$ for $n\in\N$, that is, in this case $A=0, B(n)=0$ for all $n\in\N$. If we use the Z-spectra of $x$ and $y$ we will have the estimate $\sigma_Z(x)\subset \sigma_Z(y)$. For $x:=(x(n))$ to be asymptotically stable, that is, $x\in c_0(\X)$, we need $\sigma_Z(y)=\emptyset$. However, as noted above this condition is too strong on $y$ because for many $y:=(y(n))\in c_0(\X)$ one has $\sigma_Z(y)\not= \emptyset$. In order to avoid this below we present an alternative to (\ref{spec estimate}). Let us denote by $V$ the operator $l^\infty (\X)\ni x:=(x(n)) \mapsto Ax+B*x \in l^\infty (\X)$, and $B*$ the operator $l^\infty (\X) \ni x:=(x(n))\mapsto B*x \in l^\infty (\X)$. It is easy to check that under condition (\ref{bounded eq}), $V$ leaves $c_0(\X)$ invariant, so it induces an operator $\bar V$ in the quotient space $\Y=l^\infty (\X)/c_0(\X)$. Moreover, for each $x:=(x(n))\in l^\infty (\X)$ we have
\begin{eqnarray}
 S(B*x)-B*Sx &=&  \{ B(n+1)x(0)\}_{n=0}^\infty .  \label{100}
\end{eqnarray}
By(\ref{100}), in general, $SV\not= VS$ and $SB* \not= B*S$. However, under (\ref{bounded eq}), (\ref{100}) also shows that in the quotient space $\Y$, we have $\bar S\bar V=\overline{SV}=\overline{VS}=\bar V \bar S$ and $\bar S \ \overline{B*}=\overline{B*}\ \bar S$, that is, in $\Y$, $\bar V$ and $\overline{B*}$ commutes with $\bar S$.

\medskip
For any bounded operator $B\in L(\X)$ let us denote $\sigma_\Gamma (B) := \sigma (B) \cap \Gamma$. The following lemma is an alternative of Lemma \ref{lem spec}.
\begin{lemma}\label{lem strong spec estimate}
Let $x:=(x(n))$ and $y:=(y(n))$ be in $l^\infty (\X)$ such that $x$ is a solution of (\ref{eq1}). Then
\begin{equation}\label{strong spec estimate}
\sigma (x) \subset \sigma_\Gamma (\bar V) \cup \sigma (y).
\end{equation}
\end{lemma}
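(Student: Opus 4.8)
The plan is to recast the Volterra equation as a single operator relation on sequences and then pass to the quotient $\Y = l^\infty(\X)/c_0(\X)$. Reading off (\ref{eq1}), for every $n$ we have $x(n+1) = Ax(n) + (B*x)(n) + y(n)$, so in $l^\infty(\X)$ the solution satisfies $Sx = Vx + y$ (note $Vx \in l^\infty(\X)$ thanks to (\ref{bounded eq})). Projecting to $\Y$ gives the identity
\[
\bar S \bar x - \bar V \bar x = \bar y .
\]
Since I am comparing $\F = c_0(\X)$, recall that $\sigma(x)$, $\sigma(y) \subset \Gamma$ are the sets of non-removable singularities of $R(\lambda,\bar S)\bar x$ and $R(\lambda,\bar S)\bar y$, respectively. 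So it suffices to fix $z_0 \in \Gamma$ with $z_0 \notin \sigma_\Gamma(\bar V) \cup \sigma(y)$ and show that $g(\lambda) := R(\lambda,\bar S)\bar x$, initially defined for $|\lambda|>1$, extends holomorphically across $z_0$.

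The crux is an intertwining identity for the resolvent. By (\ref{100}), in the quotient $\bar V$ commutes with $\bar S$, hence with $R(\lambda,\bar S)$ for every $\lambda \in \rho(\bar S)$. Applying $R(\lambda,\bar S)$ to the displayed relation and using $R(\lambda,\bar S)\bar S = \lambda R(\lambda,\bar S) - I$ together with $R(\lambda,\bar S)\bar V = \bar V R(\lambda,\bar S)$, I would obtain, for all $\lambda$ with $|\lambda| \ne 1$,
\[
(\lambda - \bar V)\, R(\lambda,\bar S)\bar x = \bar x + R(\lambda,\bar S)\bar y .
\]
This isolates $R(\lambda,\bar S)\bar x$ once $(\lambda - \bar V)^{-1}$ is available. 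Because $z_0 \in \Gamma$ and $z_0 \notin \sigma_\Gamma(\bar V) = \sigma(\bar V) \cap \Gamma$, we have $z_0 \in \rho(\bar V)$, and since the resolvent set is open, $(\lambda - \bar V)^{-1}$ exists and is $L(\Y)$-valued holomorphic on an open disk about $z_0$. Similarly, $z_0 \notin \sigma(y)$ means $R(\lambda,\bar S)\bar y$ has a holomorphic extension, call it $\widetilde{R}_y(\lambda)$, to a neighborhood of $z_0$.

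Intersecting the two neighborhoods, I would define on a disk $U \ni z_0$ the holomorphic function
\[
h(\lambda) := (\lambda - \bar V)^{-1}\bigl[\,\bar x + \widetilde{R}_y(\lambda)\,\bigr],
\]
which is holomorphic as a product and sum of holomorphic objects. For $\lambda \in U$ with $|\lambda|>1$ the intertwining identity gives $h(\lambda) = R(\lambda,\bar S)\bar x = g(\lambda)$, so $h$ is the sought holomorphic continuation of $g$ through $z_0$; hence $z_0$ is a regular point and $z_0 \notin \sigma(x)$. Since $z_0$ was an arbitrary point of $\Gamma$ outside $\sigma_\Gamma(\bar V) \cup \sigma(y)$, this yields (\ref{strong spec estimate}). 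The step I expect to require the most care is the manipulation of the resolvent identity: I must genuinely use the commutativity furnished by (\ref{100}) to pull $\bar V$ through $R(\lambda,\bar S)$, and I must verify that the two holomorphic extensions are valid on a common neighborhood of $z_0$ straddling $\Gamma$ so that the factorization of $g$ is legitimate precisely where needed.
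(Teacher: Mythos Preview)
Your proposal is correct and follows essentially the same route as the paper: rewrite (\ref{eq1}) as $Sx=Vx+y$, pass to the quotient, use the commutation $\bar V\bar S=\bar S\bar V$ coming from (\ref{100}) to derive $(\lambda-\bar V)R(\lambda,\bar S)\bar x=\bar x+R(\lambda,\bar S)\bar y$, and then invert $(\lambda-\bar V)$ near $z_0\notin\sigma_\Gamma(\bar V)\cup\sigma(y)$ to exhibit the holomorphic extension of $R(\lambda,\bar S)\bar x$. Your write-up is in fact slightly more careful than the paper's in spelling out that the two neighborhoods must overlap and that the extension agrees with $g$ on $U\cap\{|\lambda|>1\}$.
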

\begin{proof}
Since $\bar S$ commutes with $\bar V$ for each $|\lambda|\not=1$ the operator $R(\lambda ,\bar S)$ commutes with $\bar V$ as well. Therefore, since in the operator form Eq. (\ref{eq1}) can be re-written as $Sx=Vx+y$ and for each $|\lambda|\not= 1$ the identity $ \lambda R(\lambda ,\bar S ) \bar x
-\bar x = R(\lambda ,\bar S )\bar S \bar x  $   holds
we have
\begin{eqnarray*}
\lambda R(\lambda ,\bar S ) \bar x
-\bar x&=& R(\lambda ,\bar S) \bar S \bar x
 =
R(\lambda ,\bar S) \bar V\bar x+  R(\lambda ,\bar S) \bar y\\
&=&  \bar VR(\lambda ,\bar S)\bar x+   R(\lambda ,\bar S)\bar y .
\end{eqnarray*}
Therefore, for each $|\lambda|\not= 1$,
\begin{eqnarray*}
(\lambda - \bar V) R(\lambda ,\bar S)\bar x &=&   \bar x + R(\lambda ,\bar S)  \bar y .
\end{eqnarray*}
Let $z_0\in \Gamma$ such that $z_0\not\in \sigma (\bar V)$ and $z_0\not\in \sigma (y)$. Then, in a small neighborhood of $z_0$, by definition $R(\lambda ,\bar S)  \bar y$ has a holomorphic extension $h(\lambda )$, and $(\lambda -\bar V)^{-1}$ exists and is holomorphic in $\lambda$, so for $\lambda$ in such a neighborhood of $z_0$,
\begin{eqnarray}\label{resolvent of S}
 R(\lambda ,\bar S)\bar x &=&  ( \lambda - \bar V )^{-1} \bar x + ( \lambda - \bar V )^{-1} h(\lambda ) .
\end{eqnarray}
This shows that $z_0\not\in \sigma (x)$ and yields (\ref{strong spec estimate}).
\end{proof}

\begin{theorem}\label{the Minh thrm on stability}
Let  $\sigma_\Gamma (\bar V)$ be countable and let the following conditions be satisfied:
\begin{equation}\label{erg cond of V}
R\sigma (\bar V) \cap \Gamma =\emptyset .
\end{equation}
Then,
\begin{enumerate}
\item Eq. (\ref{eq}) is strongly asymptotically stable provided that
\begin{equation}\label{contraction operator}
\| A \| + \sum_{n=0}^\infty \| B(n) \| \le 1 ,
\end{equation}
\item Every bounded solution of Eq. (\ref{eq1}) with $(y(n))\in c_0(\X)$ is asymptotically stable;
\item Every bounded solution of Eq. (\ref{eq1}) with $(y(n))\in AAP(\N,\X)$ is asymptotically almost periodic.
\end{enumerate}
\end{theorem}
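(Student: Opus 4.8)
The plan is to obtain all three conclusions from Theorem~\ref{the tec} (and its asymptotically almost periodic counterpart) by checking, for each bounded solution in question, that its reduced spectrum is countable and that the ergodic condition (\ref{4.7}) holds at every point of it. The common starting point is the identity established inside the proof of Lemma~\ref{lem strong spec estimate}, namely $(\lambda-\bar V)R(\lambda,\bar S)\bar x=\bar x+R(\lambda,\bar S)\bar y$ for $|\lambda|\neq 1$, together with the spectral estimate $\sigma(x)\subset\sigma_\Gamma(\bar V)\cup\sigma(y)$ of (\ref{strong spec estimate}). Countability of $\sigma_\Gamma(\bar V)$ is assumed; in (i) and (ii) the forcing term lies in $c_0(\X)$, so $\bar y=\bar 0$ and $\sigma(x)\subset\sigma_\Gamma(\bar V)$ is already countable, while in (iii) one uses that an almost periodic sequence has at most countably many Fourier exponents, so $\sigma(x)\subset\sigma_\Gamma(\bar V)\cup\sigma(z_0)$ is countable, where $z_0$ is the almost periodic part of $y$. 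Thus in every case the decisive issue is the ergodic condition, and (\ref{erg cond of V}) is precisely the hypothesis designed to deliver it; this is the content of the assertion in the remark that (\ref{4.7}) follows from (\ref{erg cond of V}).

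For part (i) this step is clean and reduces (i) to Theorem~\ref{the 4.6}(i). Condition (\ref{contraction operator}) gives $\|V\|\le 1$, hence $\bar V$ is a contraction on $\Y$ and $(\lambda-\bar V)^{-1}$ exists for all $|\lambda|>1$; inserting this into the identity (with $\bar y=\bar 0$) yields $R(\lambda,\bar S)\bar x=R(\lambda,\bar V)\bar x$ there, so it suffices to prove $w(\lambda):=(\lambda-\xi_0)R(\lambda,\bar V)\bar x\to 0$ as $\lambda\downarrow\xi_0$. The estimate (\ref{112}) forces $\|w(\lambda)\|\le\|\bar x\|$ along the radius, and for any $v=(\xi_0-\bar V)u\in \mathrm{Ran}(\xi_0-\bar V)$ a one-line resolvent computation gives $(\lambda-\xi_0)R(\lambda,\bar V)v=(\lambda-\xi_0)u-(\lambda-\xi_0)^2 R(\lambda,\bar V)u\to 0$. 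Since (\ref{erg cond of V}) says exactly that $\mathrm{Ran}(\xi_0-\bar V)$ is dense in $\Y$, the uniform bound lets me pass to the limit for every element of $\Y$, in particular for $\bar x$; this is (\ref{4.7}), and Theorem~\ref{the tec} yields $x(n)\to 0$. The argument is uniform over solutions, so Eq.~(\ref{eq}) is strongly asymptotically stable.

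For (ii) the forcing lies in $c_0(\X)$, so again $\bar y=\bar 0$ and $\bar S\bar x=\bar V\bar x$; since $\bar V$ commutes with $\bar S$ this propagates to $\bar V=\bar S$ on the cyclic subspace $\mathcal M_{\bar x}$, on which $\bar S$ restricts to a surjective isometry whose spectrum is $\sigma(x)$ by Lemma~\ref{lem 4}. Because $\mathcal M_{\bar x}$ is $R(\lambda,\bar S)$-invariant we have $R(\lambda,\bar S)\bar x=R(\lambda,\bar S|_{\mathcal M_{\bar x}})\bar x$, and the density computation of part (i) applies verbatim once one knows $\mathrm{Ran}(\xi_0-\bar S|_{\mathcal M_{\bar x}})$ is dense in $\mathcal M_{\bar x}$; here the surjective-isometry structure is exploited, a Cesàro-mean and duality argument converting the global hypothesis $R\sigma(\bar V)\cap\Gamma=\emptyset$ into the vanishing of the ergodic projection of $\bar x$, which is (\ref{4.7}), and Theorem~\ref{the tec} gives $x(n)\to 0$. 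For (iii) I pass to the quotient $l^\infty(\X)/AAP(\N,\X)$: since $AAP(\N,\X)$ is translation-bi-invariant the whole apparatus of Section 2 applies with $\F=AAP(\N,\X)$, and the almost periodic forcing now satisfies $\bar y=\bar 0$ in this quotient, so the situation reduces to the homogeneous one of (ii) relative to $\F=AAP(\N,\X)$; establishing $\sigma_{AAP}(x)=\emptyset$ then gives $x\in AAP(\N,\X)$ by the analogue of Lemma~\ref{lem 4}(i).

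The main obstacle is the ergodic step. In (i) the contraction hypothesis removes it outright by making $R(\lambda,\bar V)$ available on $\{|\lambda|>1\}$, so the density of $\mathrm{Ran}(\xi_0-\bar V)$ in $\Y$ is exactly what the limit requires. In (ii) and (iii), where $\bar V$ need not be power bounded, one cannot use $R(\lambda,\bar V)$ globally and must descend to $\mathcal M_{\bar x}$, where $\bar V$ agrees with the surjective isometry $\bar S$; the delicate transfer of the no-residual-spectrum hypothesis on $\bar V$ to the ergodicity of $\bar S|_{\mathcal M_{\bar x}}$ at $\xi_0$ is precisely where the isometry/duality argument is indispensable, since a soft approximate-eigenvector estimate would fail for $\xi_0$ in the continuous spectrum. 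For (iii) there is the additional point of verifying that the spectral hypotheses survive the passage to the coarser quotient $l^\infty(\X)/AAP(\N,\X)$, which I expect to be the most technical part of the argument.
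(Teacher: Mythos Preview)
Your overall strategy---use the spectral estimate of Lemma~\ref{lem strong spec estimate} to make the spectrum countable, then verify the ergodic condition of Theorem~\ref{the tec} via the residual-spectrum hypothesis---is exactly the paper's. For part~(i) your argument coincides with the paper's: the contraction bound gives $\sigma(\bar V)\subset\bar\D$, so (\ref{resolvent of S}) with $h\equiv 0$ yields $R(\lambda,\bar S)\bar x=R(\lambda,\bar V)\bar x$ for $|\lambda|>1$, and the density computation you spell out is precisely what the paper cites as \cite[Remark~3.4]{min}.

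For (ii) and (iii) you go further than the paper, which simply writes ``similar to (i)'' and ``replace $\Y$ by $l^\infty(\X)/AAP(\N,\X)$ and repeat''. You correctly observe that without (\ref{contraction operator}) one cannot assert $\sigma(\bar V)\subset\bar\D$, so $R(\lambda,\bar V)$ need not exist for all $|\lambda|>1$ and the identity $R(\lambda,\bar S)\bar x=R(\lambda,\bar V)\bar x$ used in (i) is unavailable. Your detour through $\mathcal M_{\bar x}$, on which $\bar V$ agrees with the surjective isometry $\bar S$ and the resolvent estimate is restored, is a natural repair. However, the step you yourself flag as ``delicate''---passing from density of $\mathrm{Ran}(\xi_0-\bar V)$ in $\Y$ to density of $\mathrm{Ran}(\xi_0-\bar S|_{\mathcal M_{\bar x}})$ in $\mathcal M_{\bar x}$---is genuinely nontrivial, and you do not carry it out; a ``Ces\`aro-mean and duality argument'' is named but not given, and it is not clear that $R\sigma(\bar V)\cap\Gamma=\emptyset$ on the ambient space forces the needed density on an invariant subspace. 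The paper does not address this point either, so your proposal is more candid about the difficulty than the paper's own proof, but as written it remains incomplete at exactly this spot. The same caveat applies to (iii): you rightly note that the hypotheses on $\bar V$ must be transferred to the operator induced on $l^\infty(\X)/AAP(\N,\X)$, and the paper asserts this transfer without comment.
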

\begin{proof}
(i) First, by (\ref{contraction operator}) the operator $V$ is a contraction, so is the induced operator $\bar V$. Therefore, every solution of (\ref{eq}) is bounded, and $\sigma (\bar V)$ is part of the closed unit disk $\bar \D$. Next, if $|\lambda |>1$, by (\ref{resolvent of S}) (with $h(\lambda )=0$) yields
\begin{equation}\label{1001}
 R(\lambda ,\bar S)\bar x  =  R(\lambda , \bar V) \bar x .
\end{equation}
Notice that \cite[Remark 3.4]{min} actually shows that if $T$ is any bounded operator in a Banach space with $R\sigma (T)\cap \Gamma =\emptyset$, then, for each $\xi_0\in \Gamma$ and $x\in \X$,  $\lim_{\lambda \to\xi_0}R(\lambda ,T)x=0$. Therefore,
by (\ref{1001}), condition (\ref{erg cond of V}) yields that for each $\xi_0\in \sigma (x) \subset \sigma _\Gamma (\bar V)$
\begin{eqnarray}
0\le \lim_{\lambda \downarrow \xi_0} \| (\lambda -\xi_0 )R(\lambda ,\bar
S)\bar x \| &=& \lim_{\lambda \downarrow \xi_0} \| (\lambda -\xi_0 )R(\lambda ,\bar
V)\bar x \| =0.
\end{eqnarray}
Finally, the assertion of the theorem follows from that of Theorem \ref{the tec}.

\medskip
(ii) The proof is similar to (i), so details are omitted.

\medskip
(iii) If we replace $\Y$ by the quotient space $ l^\infty (\X)/AAP(\N,\X)$ and repeat the above construction and proof of (i) we can prove (iii). Therefore, the details are omitted.
\end{proof}
\begin{remark}
When $B(n)=0$ for all $n\in \N$, it is easy to see that $\sigma (\bar V) \subset \sigma (V) \subset \sigma (A)$. Accordingly, part i) of Theorem \ref{the Minh thrm on stability} is a version for Volterra equations of the Arendt-Batty-Ljubich-Vu Theorem \cite[Theorem 5.1]{arebat3}. It is clear that parts (ii) and (iii) are extensions of a theorem due to Y. Katznelson, L. Tzafriri (see \cite[Theorem 1]{kattza}) for individual orbits of discrete systems.
\end{remark}

Below we will demonstrate further advantages of using the operator $V$ determined by the Volterra equations (\ref{eq}) in the quotient space $l^\infty (\X)/AAP(\N,\X)$. In fact, we will address the question as how the "input" $y:=(y(n))$ in Eq. (\ref{eq1}) can control the "output" $x:=(x(n))$ as a solution to (\ref{eq1}).
\begin{lemma}\label{lem 4.13}
Let $\F$ be a translation-bi-invariant closed subspace of $l^\infty (\X)$. Moreover, assume that ${\cal F}$ is invariant under the operator $\bar V$. Then, for every bounded solution $x:=(x(n))\in l^\infty (\X)$ of (\ref{eq1}), the following spectral estimate holds
\begin{equation}\label{new spec estimate}
\sigma_\F (y)  \subset  \sigma_\F (x) .
\end{equation}
\end{lemma}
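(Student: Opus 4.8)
The plan is to reuse, in the quotient $\Y = l^\infty(\X)/\F$, the resolvent identity that was derived in the proof of Lemma~\ref{lem strong spec estimate}, but to read it in the reverse direction, solving for $R(\lambda,\bar S)\bar y$ in terms of $R(\lambda,\bar S)\bar x$. First I would note that the hypothesis on $\F$, together with the inclusion $c_0(\X)\subset\F$, amounts to $V(\F)\subset\F$; since $\F$ is translation-bi-invariant, both $S$ and $V$ leave $\F$ invariant and therefore induce bounded operators on $\Y=l^\infty(\X)/\F$, which I continue to denote by $\bar S$ and $\bar V$. Rewriting Eq.~(\ref{eq1}) in operator form as $Sx=Vx+y$ and passing to classes in $\Y$ gives, for every bounded solution $x$,
\[
\bar y=(\bar S-\bar V)\bar x .
\]

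Next I would check that $\bar S$ and $\bar V$ commute on $\Y$. Because $A$ commutes termwise with the shift, the commutator reduces via (\ref{100}) to $(SV-VS)x=S(B*x)-B*(Sx)=\{B(n+1)x(0)\}_{n=0}^\infty$, which lies in $c_0(\X)$ under condition (\ref{bounded eq}), hence in $\F$; thus $\overline{SV}=\overline{VS}$ in $\Y$. Since $\sigma(\bar S)\subset\Gamma$, for every $|\lambda|\neq1$ the resolvent $R(\lambda,\bar S)$ then commutes with $\bar V$. Combining $\bar y=(\bar S-\bar V)\bar x$ with this commutation and the elementary identity $R(\lambda,\bar S)\bar S=\lambda R(\lambda,\bar S)-I$, I obtain, for all $|\lambda|\neq1$,
\[
R(\lambda,\bar S)\bar y=(\lambda-\bar V)R(\lambda,\bar S)\bar x-\bar x ,
\]
which is precisely the identity of Lemma~\ref{lem strong spec estimate} rearranged.

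The inclusion then follows by contraposition. Fix $z_0\in\Gamma$ with $z_0\notin\sigma_\F(x)$; by definition the function $g(\lambda):=R(\lambda,\bar S)\bar x$, holomorphic on $\{|\lambda|\neq1\}$, admits a holomorphic extension $h$ to a neighborhood $U(z_0)$ of $z_0$. Since $\lambda\mapsto\lambda-\bar V$ is an $L(\Y)$-valued affine, hence holomorphic, function and $\bar x$ is constant, the map $\lambda\mapsto(\lambda-\bar V)h(\lambda)-\bar x$ is holomorphic on $U(z_0)$, and by the displayed identity it coincides with $R(\lambda,\bar S)\bar y$ on $U(z_0)\setminus\Gamma$. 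Thus $R(\lambda,\bar S)\bar y$ extends holomorphically across $z_0$, i.e. $z_0\notin\sigma_\F(y)$, which establishes (\ref{new spec estimate}).

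I expect the only point needing care to be the commutation of $\bar S$ and $\bar V$ in the general quotient $l^\infty(\X)/\F$: the earlier discussion produced $\bar S\bar V=\bar V\bar S$ only in $l^\infty(\X)/c_0(\X)$, and transferring it to $\Y$ relies on the inclusion $c_0(\X)\subset\F$, valid because every translation-bi-invariant subspace contains $c_0(\X)$. Once the identity for $R(\lambda,\bar S)\bar y$ is in hand, the holomorphic-extension argument is routine and requires no spectral hypothesis on $\bar V$ whatsoever.
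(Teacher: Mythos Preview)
Your proof is correct and follows essentially the same route as the paper: derive the identity $R(\lambda,\bar S)\bar y=(\lambda-\bar V)R(\lambda,\bar S)\bar x-\bar x$ in the quotient $l^\infty(\X)/\F$ from $\bar S\bar x=\bar V\bar x+\bar y$ together with the commutation of $\bar S$ and $\bar V$, and then read off that a holomorphic extension of $R(\lambda,\bar S)\bar x$ near $z_0$ yields one for $R(\lambda,\bar S)\bar y$. You are in fact more explicit than the paper in justifying that the commutator $SV-VS$ lands in $c_0(\X)\subset\F$, which is exactly the point needed to transfer the commutation from the $c_0$-quotient to the $\F$-quotient.
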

\begin{proof}
Since $x$ is a bounded solution of (\ref{eq1}) for $|\lambda |\not= 1$ we have
$$
R(\lambda , \bar S)\bar S \bar x = R(\lambda , \bar S)\bar V \bar x +R(\lambda , \bar S)\bar y.
$$
As $\bar S$ commutes with $\bar V$, for $|\lambda |\not= 1$ and $R(\lambda , \bar S)\bar S\bar x= \lambda R(\lambda , \bar S) \bar x-\bar x $ this yields
\begin{eqnarray*}
R(\lambda , \bar S)\bar S\bar x &=&  \lambda R(\lambda , \bar S) \bar x-\bar x \\
&=& \bar V R(\lambda , \bar S)\bar x +R(\lambda , \bar S)\bar y,
\end{eqnarray*}
so we have
$$
(\lambda -\bar V) R(\lambda , \bar S)\bar x -  \bar x = R(\lambda , \bar S)\bar y.
$$
Therefore, $R(\lambda , \bar S)\bar y$ can be extended to a holomorphic function in a neighborhood of each $\lambda_0\in \Gamma$ such that $\lambda_0\not\in \sigma _\F (x)$. And thus, (\ref{new spec estimate}) follows.
\end{proof}

\begin{theorem}
Eq. (\ref{eq1}) has no asymptotically stable solution if $y\not\in c_0 (\X)$. Similarly, it has no asymptotically almost periodic solutions if $y\not\in AAP(\N, \X)$.
\end{theorem}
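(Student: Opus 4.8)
The plan is to reduce both assertions to the spectral inclusion of Lemma \ref{lem 4.13}, instantiated at the two translation-bi-invariant subspaces $\F=c_0(\X)$ and $\F=AAP(\N,\X)$, and to combine it with the basic fact that the $\F$-spectrum of a bounded sequence is empty exactly when the sequence lies in $\F$. I would argue by contraposition in each case: show that an asymptotically stable (resp. asymptotically almost periodic) solution forces $y\in c_0(\X)$ (resp. $y\in AAP(\N,\X)$). Note that a solution with $x(n)\to 0$ and an asymptotically almost periodic solution are both automatically bounded, so Lemma \ref{lem 4.13} applies.

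First I would record the dichotomy underlying everything: for any translation-bi-invariant closed subspace $\F$ and any $x\in l^\infty(\X)$ one has $\sigma_\F(x)=\emptyset$ if and only if $x\in\F$. One direction is trivial, since $\bar x=\bar 0$ in $\Y=l^\infty(\X)/\F$ gives $R(\lambda,\bar S)\bar x\equiv 0$. For the converse, $\sigma_\F(x)=\emptyset$ means that $g(\lambda):=R(\lambda,\bar S)\bar x$, which is holomorphic for $|\lambda|\neq 1$ because $\sigma(\bar S)\subset\Gamma$, extends holomorphically across $\Gamma$ and hence to an entire $\Y$-valued function; the resolvent estimate (\ref{112}) forces $g(\lambda)\to 0$ as $|\lambda|\to\infty$, so $g\equiv 0$ by Liouville, and then $\bar x=\lim_{\lambda\to\infty}\lambda g(\lambda)=\bar 0$. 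This is precisely the argument behind Proposition \ref{pro 3.3}(iv) and Lemma \ref{lem 4}(i), now read off for a general $\F$.

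Next I would verify that both choices of $\F$ meet the hypotheses of Lemma \ref{lem 4.13}. Both $c_0(\X)$ and $AAP(\N,\X)$ are translation-bi-invariant closed subspaces. Under (\ref{bounded eq}) the operator $V=A+B*$ leaves $c_0(\X)$ invariant, and by (\ref{100}) the commutator $SV-VS$ maps into $c_0(\X)$, so $\bar S$ commutes with $\bar V$ on either quotient. The only genuine verification is that $V$ leaves $AAP(\N,\X)$ invariant: for the term $A$ this is clear, since applying a bounded operator to the almost periodic part of a decomposition $x=y+z$ (with $y\in c_0(\X)$ and $z$ almost periodic) again yields an almost periodic sequence; for the convolution term I would compare the unilateral convolution $(B*z)(n)=\sum_{k=0}^n B(k)z(n-k)$ with the bilateral series $w(n):=\sum_{k=0}^\infty B(k)z(n-k)$ built from the almost periodic extension of $z$ to $\Z$. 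The series for $w$ converges uniformly by (\ref{bounded eq}), so $w$ is a uniform limit of almost periodic sequences and is almost periodic, while $\|(B*z)(n)-w(n)\|\le\|z\|_\infty\sum_{k>n}\|B(k)\|\to 0$ shows $B*z\in AAP(\N,\X)$. Hence $V(AAP(\N,\X))\subset AAP(\N,\X)$.

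With these preparations the conclusion is immediate. For a bounded solution $x$ of (\ref{eq1}), Lemma \ref{lem 4.13} with $\F=c_0(\X)$ gives $\sigma(y)\subset\sigma(x)$ (recall $\sigma_{c_0}=\sigma$); were $x$ asymptotically stable then $x\in c_0(\X)$, so $\sigma(x)=\emptyset$ by the dichotomy, forcing $\sigma(y)=\emptyset$ and thus $y\in c_0(\X)$. Contrapositively, $y\notin c_0(\X)$ excludes any asymptotically stable solution. Repeating with $\F=AAP(\N,\X)$ gives $\sigma_{AAP}(y)\subset\sigma_{AAP}(x)$, and an asymptotically almost periodic solution would give $\sigma_{AAP}(x)=\emptyset$, whence $\sigma_{AAP}(y)=\emptyset$ and $y\in AAP(\N,\X)$; so $y\notin AAP(\N,\X)$ excludes any such solution. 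The one step demanding real work, and the main obstacle, is the invariance $V(AAP(\N,\X))\subset AAP(\N,\X)$, i.e. the almost periodicity of the convolution $B*z$.
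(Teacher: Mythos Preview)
Your proof is correct and follows essentially the same approach as the paper: contraposition via Lemma~\ref{lem 4.13} combined with the dichotomy $\sigma_\F(x)=\emptyset\Leftrightarrow x\in\F$. Your version is considerably more thorough---you supply the Liouville argument for the dichotomy in general $\F$ and, more importantly, verify explicitly that $V$ leaves $AAP(\N,\X)$ invariant (a hypothesis of Lemma~\ref{lem 4.13} that the paper's ``similarly'' glosses over)---but the skeleton is identical.
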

\begin{proof}
Every asymptotically stable solution $x$ is in $c_0(\X)$, so its spectrum $\sigma (x)$ must be empty. By Lemma \ref{lem 4.13} this is impossible if $y\not\in c_0(\X)$. Similarly, we can prove the second part of the assertion.
\end{proof}

\bibliographystyle{amsplain}

\end{document}